\documentclass[12pt]{amsart}


\usepackage{DKstyle}

\newcommand{\vertiii}[1]{{\left\vert\kern-0.25ex\left\vert\kern-0.25ex\left\vert #1
    \right\vert\kern-0.25ex\right\vert\kern-0.25ex\right\vert}}
\theoremstyle{plain}



\newcommand{\fg}{\frak{g}}

\usepackage{caption}
\usepackage[labelfont=rm]{subcaption}

\usepackage[pagebackref=true, colorlinks]{hyperref}

\hypersetup{pdffitwindow=true,linkcolor=blue,citecolor=blue,urlcolor=blue,menucolor=blue}

\usepackage{comment}

\subjclass{}%
\keywords{}%

\date{\today}%
\dedicatory{}%
\commby{}%

\title{The second moment of the Siegel transform in the space of symplectic lattices}
\author{Dubi Kelmer}
\address{Department of Mathematics, Boston College, Chestnut Hill MA 02467-3806, USA}
\email{kelmer@bc.edu}
\author{Shucheng Yu}
\address{Department of Mathematics, Boston College, Chestnut Hill MA 02467-3806, USA}
\email{shucheng.yu@bc.edu}
\thanks{This work was partially supported by NSF CAREER grant DMS-1651563.}

\begin{document}
\begin{abstract}
Using results from spectral theory of Eisenstein series, we prove a formula for the second moment of the Siegel transform when averaged over the subspace of symplectic lattices. This generalizes the classical formula of Rogers for the second moment in the full space of unimodular lattices. Using this new formula we give very strong bounds for the discrepancy of the number of lattice points in an Borel set, which hold for generic symplectic lattices.
\end{abstract}
\maketitle

\section*{Introduction}
Given a function $f:\R^n\to \C$ of sufficiently fast decay, its modified Siegel transform\footnote{In the standard Siegel transform the sum is over all lattice points.} $F_f$ is a function on the space, $X_n$, of unimodular lattices in $\R^n$, defined by
\begin{equation}
\label{e:Siegel} F_f(\Lambda)=\sum_{\vec{v}\in \Lambda_{\rm pr}}f(\vec{v}),\end{equation}
where $\Lambda_{\rm pr}$ denotes the set of primitive vectors in a lattice $\Lambda$. The space of lattices comes with a natural $\SL_n(\R)$-invariant probability measure $\nu_n$ and Siegel's mean value theorem states that
$$\int_{X_n}F_f(\Lambda)d\nu_n(\Lambda)=\frac{1}{\zeta(n)}\int_{\R^n}f(\vec{x})d\vec{x}.$$
In \cite{Rogers1955} Rogers proved beautiful and useful formulas for higher moments of $F_f$. In particular, his second moment formula states that, for $n\geq 3$
$$\int_{X_n}\left|F_f(\Lambda)\right|^2d\nu_n(\Lambda)=\frac{1}{\zeta(n)^2}\left|\int_{\R^n}f(\vec{x})d\vec{x}\right|^2+\frac{1}{\zeta(n)}\int_{\R^n}\left|f(\vec{x})\right|^2d\vec{x}+\frac{1}{\zeta(n)}\int_{\R^n}f(\vec{x})\overline{f(-\vec{x})}d\vec{x}.$$
Rogers' formula was since used in many applications of metric Diophantine approximations. For example,  Schmidt \cite{Schmidt1960} used it to study the lattice point counting problem, counting the number of lattice points in a Borel set $B\subseteq \R^n\bk\{0\}$, proving an optimal bound for the discrepancy for a generic lattice. The expected number of lattice points (respectively primitive lattice points) in $B$ is $\vol(B)$ (respectively $\frac{\vol(B)}{\zeta(n)}$) and the discrepancies are defined by
$$D(\Lambda,B)=\left|\tfrac{\#(\Lambda\cap B)}{\vol(B)}-1\right|, \quad D_{\rm pr}(\Lambda,B)=\left|\tfrac{\zeta(n)\#(\Lambda_{\rm pr}\cap B)}{\vol(B)}-1\right|.$$
For $n\geq 3$, applying Rogers' second moment formula to the indicator function of $B$ gives the mean square bound,
\begin{equation}\label{e:DiscVar}
\int_{X_n}|D_{\rm pr}(\Lambda,B)|^2d\nu_n(\Lambda)\leq \frac{2\zeta(n)}{\vol(B)}.
\end{equation}
Using this mean square bound, Schmidt \cite{Schmidt1960} showed that for any linearly ordered (with respect to inclusion) family, $\cB$, of Borel sets in $\R^n\bk\{0\}$, for $\nu_n$-a.e. lattice $\Lambda\in X_n$ there is a constant $C_{\Lambda}$ such that for all $B\in \cB$ with $\vol(B)>C_{\Lambda}$ both discrepancies $D_{\rm pr}(\Lambda,B)$ and $D(\Lambda,B)$ are bounded by $\tfrac{\log^2(\vol(B))}{\vol(B)^{1/2}}$.
\begin{rem}
We remark that the term $\log^2(\vol(B))$ above can be replaced by $\tfrac{\log(\vol(B))}{\psi^{1/2}(\log\vol(B))}$ for any positive, non-increasing function $\psi$ with $\int_1^\infty \psi(t)dt<\infty$, and we made a choice of $\psi(t)=1/t^2$ to simplify the statement. We further note that, while Schmidt stated his result for the larger space of all lattices (not just unimodular lattices) his proof actually implies the above stated result on unimodular lattices as well.
\end{rem}
\begin{rem}
While Rogers' second moment formula is valid only for $n\geq 3$, by using a different argument, Schmidt was able to prove a weaker variant 
also for $n=2$ after adding another factor of $\log(\vol(B))$.  Moreover,  it was later shown in \cite{AthreyaMargulis09}, using spectral theory of Eisenstein series of $\SL_2(\Z)$, that for primitive lattice points \eqref{e:DiscVar} still holds (up to a constant factor) also for $n=2$, and hence $D_{\rm pr}(\Lambda,B)\leq\tfrac{\log^2(\vol(B))}{\vol(B)^{1/2}}$ also holds for $\nu_2$-a.e. $\Lambda\in X_2$. \end{rem}
\begin{rem}
In addition to Schmidt's classical result, Rogers' formula was also used  in \cite{AthreyaMargulis09} to prove a random version of Minkowski theorem studying the set of lattices missing a large set,  and more recently in \cite{AthreyaMargulis18} to prove an effective and quantitative version of Oppenheim conjecture for generic forms. Rogers' formulas for higher moments were also used in \cite{SarnakStrombergsson06,Sodergren13} to study values of Epstein zeta functions.
\end{rem}

The starting point of our work is the observation that the Siegel transform can be interpreted as an incomplete Eisenstein series for a suitable maximal parabolic subgroup of $\SL_n(\R)$, and Rogers' formula can be seen as formula for moments of these incomplete Eisenstein series. It is thus possible to use the spectral theory of Eisenstein series to give another proof of Rogers' formula. The advantage of this approach is that it could be generalized to other semisimple groups and other maximal parabolic subgroups, leading  to other summation formulas. To demonstrate this approach, instead of giving another proof of Rogers' original formula, we shall prove a second moment formula with $\SL_n(\R)$ replaced by the symplectic group. We will then use this formula to give bounds for the discrepancy for a generic symplectic lattice, a problem recently studied by Athreya and Konstantoulas in \cite{AthreyaKonstantoulas2016}.

To describe the space of symplectic lattices, fix a symplectic form on $\R^{2n}$ and let $\Sp(2n,\R)$ denote the group of linear transformations preserving this form. The space of symplectic lattices in $\R^{2n}$ is then parameterized by the homogenous space $Y_n=\Sp(2n,\Z)\bk \Sp(2n,\R)$ via the map sending $g\in \Sp(2n,\R)$ to the lattice $\Lambda=\Z^{2n}g$, and it has a natural probability measure $\mu_n$ coming from Haar measure on $\Sp(2n,\R)$.  There is a natural generalization of Siegel's mean value theorem to the subspace of symplectic lattices, stating that
\begin{equation}\label{firstmomentin}
\int_{Y_n}F_f(\Lambda)d\mu_n(\Lambda)=\frac{1}{\zeta(2n)}\int_{\R^{2n}}f(\vec{x})d\vec{x}.
\end{equation}
Our main result is the following generalization of Rogers' second moment formula to this space. We say that a function $f$ on $\R^{2n}$ is even if $f(\vec{x})=f(-\vec{x})$ for almost every $\vec{x}\in\R^{2n}$ and denote by
$L^2_{\rm{even}}(\R^{2n})$ the space of even functions in $L^2(\R^{2n})$.
\begin{thm}\label{thm:Rogersformula}
For any $n\geq 1$, there is an isometry $\iota:L^2_{\rm{even}}(\R^{2n})\to L^2_{\rm{even}}(\R^{2n})$ such that for any bounded compactly supported even $f:\R^{2n}\to \C$
\begin{equation}\label{thm:formula}
\int_{Y_n}\left|F_f(\Lambda)\right|^2d\mu_n(\Lambda)=\left|\frac{1}{\zeta(2n)}\int_{\R^n} f(\vec{x})d\vec{x}\right|^2+\frac{2}{\zeta(2n)}\int_{\R^{2n}}\left(|f(\vec{x})|^2+ \overline{f(\vec{x})}\iota(f)(\vec{x})\right)d\vec{x}.
\end{equation}
\end{thm}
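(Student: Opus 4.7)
The plan is to realize $F_f$ as an incomplete Eisenstein series for a maximal parabolic of $\Sp(2n,\R)$ and then to evaluate $\|F_f\|_{L^2(Y_n)}^2$ spectrally. Let $P\subset \Sp(2n,\R)$ denote the maximal parabolic stabilizing the line $\R\vec{e}_1$; its Levi quotient is $\mathrm{GL}_1(\R)\times \Sp(2n-2,\R)$ and its unipotent radical is a Heisenberg group of dimension $2n-1$. Since $\Sp(2n,\Z)$ acts transitively on primitive vectors of $\Z^{2n}$ and $f$ is even, we can write
\[F_f(\Gamma g) = 2\sum_{\gamma\in P_\Z\bk \Gamma} f(\vec{e}_1\gamma g), \qquad P_\Z := P\cap \Sp(2n,\Z).\]
Decomposing $f$ in the radial variable via a Mellin transform and in the angular variable spectrally on $S^{2n-1}$ then expresses $F_f$ as a superposition of degenerate Eisenstein series for $P$, parameterized by a complex variable $s$ and by angular/Levi data.

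The spectral decomposition of $L^2(Y_n)$ into cuspidal, residual, and continuous pieces allows one to compute $\|F_f\|^2$. As an incomplete Eisenstein series from $P$, the function $F_f$ is orthogonal to cusp forms and to Eisenstein series from parabolics not associate to $P$. Its spectral expansion therefore reduces to (i) the constant function, which by Siegel's mean value formula \eqref{firstmomentin} contributes exactly $|\int f|^2/\zeta(2n)^2$; (ii) the continuous spectrum along $\Re(s)=\tfrac{1}{2}$ for $P$; and (iii) possibly finitely many residues. For each Eisenstein series $E^P(\,\cdot\,,s,\Phi)$ from $P$ induced from a cuspidal datum $\Phi$ on the Levi, the pairing $\langle F_f,E^P(\,\cdot\,,s,\Phi)\rangle$ is computed by Rankin--Selberg unfolding, producing a Mellin-type transform of $f$ coupled with $\Phi$.

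Applying Plancherel on the Levi and Mellin--Plancherel in $s$, the continuous contribution rewrites as an integral over $\R^{2n}$. This integral splits into a ``diagonal'' piece, producing $\tfrac{2}{\zeta(2n)}\int |f|^2$, and an ``off-diagonal'' piece coming from the standard intertwining operator $M(s)$ that implements the functional equation $s\mapsto 1-s$ of $E^P$. The latter takes the form $\tfrac{2}{\zeta(2n)}\int \overline{f}\,\iota(f)$ for an operator $\iota$ defined on $L^2_{\mathrm{even}}(\R^{2n})$ by conjugating $M(\tfrac{1}{2}+it)$ with the radial Mellin and angular transforms. Unitarity of the scattering matrix $M(s)$ on the unitary axis then forces $\iota$ to be an isometry of $L^2_{\mathrm{even}}(\R^{2n})$, and the restriction to even functions mirrors the fact that $P$ stabilizes a line (not a vector), so only even data passes through the incomplete Eisenstein series.

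The principal obstacle is the explicit computation of the intertwining operator $M(s)$ for this maximal parabolic of $\Sp(2n,\R)$ and the packaging of the sum over cuspidal data $\Phi$ on the Levi $\Sp(2n-2,\R)$ into a single isometry $\iota$ on $L^2_{\mathrm{even}}(\R^{2n})$. Fortunately, \eqref{thm:formula} only asserts the existence of such an $\iota$, so the intermediate cuspidal data should be handleable abstractly via unitarity of the scattering matrix and general properties of Langlands' spectral decomposition rather than through explicit formulas.
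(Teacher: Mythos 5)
Your starting point agrees with the paper's: $F_f$ is an incomplete Eisenstein series for the maximal parabolic stabilizing a line (Lemma \ref{coset} and Proposition \ref{p:F2Theta}; note the paper normalizes $\Gamma_{P_n}=Q_n(\Z)$ to fix the vector $\vec{e}_{2n}$, so there is no factor of $2$ and no quotient by $\pm 1$). From there, however, the paper does \emph{not} invoke the Langlands spectral decomposition at all: it unfolds $\|\Theta_f\|_2^2$ against the constant term $\mathcal P_f$ of $\Theta_f$ along $Q_n$ (Proposition \ref{lem:pi2}), computes that constant term explicitly for spherical data (Proposition \ref{const}), propagates the formula to all $K_n$-types with raising operators, and reads off $\iota$ from the explicit answer. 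Your route is the one the paper explicitly flags in a remark as "known in great generality \ldots but not always easy to translate into an explicit formula," and as written it has two genuine gaps.

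First, the spectral support of $\Theta_f$ is misidentified. Since $f$ is left-invariant under the whole of $Q_n=U_nM_n$, it pairs trivially with Eisenstein series induced from \emph{cuspidal} data $\Phi$ on the Levi $\Sp(2n-2,\R)$; the only Levi datum that survives is the constant function, which for $n\ge 2$ lies in the \emph{residual} spectrum of the Levi (an iterated residue of Borel Eisenstein series). The relevant objects are therefore degenerate Eisenstein series, and their contribution to the inner product cannot be "handled abstractly via unitarity of the scattering matrix": the isometry property of $\iota$ is exactly the statement that a specific \emph{normalized} intertwining operator is unitary on the critical line $\Re(s)=n$ (not $\Re(s)=\tfrac12$ in this normalization), which in the paper is the identity $Z_m(2n-s)Z_m(s)=1$ with $Z_m(s)=P_m(s)\xi(s-2n+1)/\xi(s)$ — and establishing that requires knowing the operator explicitly. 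Second, you leave "possibly finitely many residues" unresolved, whereas the final formula contains no residual term beyond the constant function. In the paper's computation the contour shift from $\Re(s)=\sigma>2n$ to $\Re(s)=n$ picks up poles at $s=2n$ (the main term) \emph{and} at $s=2n-1$, and the latter cancels only against the third term $\tfrac{\xi(s-1)}{\xi(s)}\,y\,E_{n-1}(s-1,m)$ of the constant-term formula after integrating over $M_n(\Z)\bk M_n$ via Lemma \ref{firstmoment}. Without that explicit cancellation your expansion would contain a spurious term proportional to $\hat\rho(2n-1)$, so the argument cannot be closed at the level of generality you propose.
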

\begin{rem}
For $n=1$ we have that $\Sp(2n,\R)=\SL_2(\R)$ and this result follows from the analysis in \cite{KelmerMohammadi12}, we will thus concentrate below on the case when $n\geq 2$.
\end{rem}
\begin{rem}
The restriction to even function is just cosmetic. Any function can be decomposed as $f=f_{\rm{even}}+f_{\rm{odd}}$ and since lattices are invariant under inversion we always have that $F_f=F_{f_{\rm{even}}}$. We thus get a similar moment formula for all functions.
The condition that $f$ being compactly supported can also be relaxed, and replaced with the condition that $f$ decays sufficiently fast to insure that the series defining $F_f$ absolutely converges, or alternatively, with the condition that $f\in L^2(\R^{2n})$ is bounded and nonnegative. In particular, it applies for $f$ the indicator function of any finite-volume set in $\R^{2n}$.
\end{rem}


We now describe an application of this formula to the problem of symplectic lattice point counting. The space of symplectic lattices, $Y_n$, naturally sits inside the space, $X_{2n}$, of all unimodular lattices. But since it has $\nu_{2n}$-measure zero,  one can not say anything about symplectic lattices from statements on generic lattices in $X_{2n}$.  Instead, it is more natural to work with the measure $\mu_n$ on $Y_n$ directly.
In \cite{AthreyaKonstantoulas2016}, Athreya and Konstantoulas asked if one can give good bounds for the discrepancy, $D_{\rm pr}(\Lambda,B)$, which hold for any Borel set $B$ and for $\mu_n$-almost every symplectic lattice. They actually studied the slightly larger space $\tilde{Y}_n=G\Sp_n(\Z)\bk G\Sp_n(\R)$ of general symplectic lattices. In this space, under some mild geometric assumptions on the target sets, they managed to prove a power saving bound for the mean square of the discrepancy, as well as for the discrepancy of a generic general symplectic lattice.

As a first application of our moment formula we can get the following square root bound for the mean square of the discrepancy, in the primitive, as well as general lattice point counting problems.
 \begin{thm}\label{thm:meansquare}
For $n\geq 2$, for any Borel set $B\subset \R^{2n}\setminus\{0\}$ of finite volume
\begin{equation}\label{eq:measureestimate}
\int_{Y_n}|D_{\rm pr}(\Lambda,B)|^2d\mu_n(\Lambda)\leq \frac{4\zeta(2n)}{\vol(B)} ,
\end{equation}
and
\begin{equation}\label{eq:measureestimate2}
\int_{Y_n}|D(\Lambda,B)|^2 d\mu_n(\Lambda)\leq \frac{4\zeta(n)^2}{\zeta(2n)\vol(B)}.
\end{equation}
\end{thm}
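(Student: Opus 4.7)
The plan is to deduce both variance bounds by applying Theorem~\ref{thm:Rogersformula} to suitable indicator functions and controlling the cross term via the isometry property of $\iota$. For \eqref{eq:measureestimate}, I would take $f=\mathbf{1}_B$, so that $F_f(\Lambda)=\#(\Lambda_{\rm pr}\cap B)$ has mean $\vol(B)/\zeta(2n)$ by \eqref{firstmomentin}. Since $F_f=F_{f_{\rm even}}$, the moment formula of Theorem~\ref{thm:Rogersformula} applied to $f_{\rm even}$ gives
\[
\mathrm{Var}(F_f)=\frac{2}{\zeta(2n)}\int_{\R^{2n}}\bigl(|f_{\rm even}|^2+\overline{f_{\rm even}}\,\iota(f_{\rm even})\bigr)\,d\vec{x}.
\]
Cauchy--Schwarz combined with the isometry property of $\iota$ bounds the cross term by $\|f_{\rm even}\|_2^2$, and by orthogonality $\|f_{\rm even}\|_2\leq \|f\|_2=\vol(B)^{1/2}$. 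This yields $\mathrm{Var}(F_f)\leq 4\vol(B)/\zeta(2n)$, and multiplying by $(\zeta(2n)/\vol(B))^2$ produces \eqref{eq:measureestimate}.

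For \eqref{eq:measureestimate2}, I would decompose every nonzero lattice vector uniquely as $k\vec{w}$ with $\vec{w}\in\Lambda_{\rm pr}$ and $k\geq 1$, yielding
\[
\#(\Lambda\cap B)=\sum_{k\geq 1}F_{f_k}(\Lambda),\qquad f_k:=\mathbf{1}_{B/k},
\]
whose expected values sum to $\vol(B)$. Applying Cauchy--Schwarz pointwise in $\Lambda$ with weights $k^{-n/2}\cdot k^{n/2}$ (valid since $\sum_k k^{-n}=\zeta(n)<\infty$ for $n\geq 2$), then integrating and invoking the primitive-case variance bound for each $f_k$ (noting $\|f_k\|_2^2=k^{-2n}\vol(B)$), I would obtain
\[
\int_{Y_n}\bigl|\#(\Lambda\cap B)-\vol(B)\bigr|^2\,d\mu_n\leq \zeta(n)\sum_{k\geq 1}k^n\cdot\frac{4k^{-2n}\vol(B)}{\zeta(2n)}=\frac{4\zeta(n)^2\vol(B)}{\zeta(2n)},
\]
from which \eqref{eq:measureestimate2} follows after dividing by $\vol(B)^2$.

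No step presents a genuine obstacle once Theorem~\ref{thm:Rogersformula} is granted: the role of the isometry $\iota$ is precisely to allow the cross term to be dominated by the diagonal via Cauchy--Schwarz, and the weighted Cauchy--Schwarz on the divisor decomposition is the only additional ingredient for the non-primitive count. The one mild technical point is that Theorem~\ref{thm:Rogersformula} is formally stated for bounded, compactly supported, even $f$, whereas $\mathbf{1}_B$ and its dilates may fail both conditions; this is precisely the extension to indicators of arbitrary finite-volume sets noted in the second remark following that theorem.
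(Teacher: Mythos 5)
Your proof is correct. For \eqref{eq:measureestimate} your argument is essentially identical to the paper's: apply Theorem \ref{thm:Rogersformula} to the even part of $\chi_B$, bound the cross term via Cauchy--Schwarz together with the isometry of $\iota$, and use $\|f_{\rm even}\|_2^2\leq\vol(B)$. For \eqref{eq:measureestimate2} you use the same decomposition $\#(\Lambda\cap B)=\sum_{k}F_{f_k}(\Lambda)$ with $f_k=\chi_{B/k}$, but then diverge slightly: the paper expands the double sum $\sum_{k,l}\langle F_{f^k},F_{f^l}\rangle_{Y_n}$ via the polarized inner-product form \eqref{e:inner} of the moment formula and applies Cauchy--Schwarz in $L^2(\R^{2n})$ to each off-diagonal term, whereas you apply a weighted Cauchy--Schwarz in $k$ pointwise in $\Lambda$ (with weights $k^{\pm n/2}$), so that only the diagonal variances $\mathrm{Var}(F_{f_k})\leq 4k^{-2n}\vol(B)/\zeta(2n)$ are needed. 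Your route is marginally more self-contained---it invokes only the already-established primitive-case bound applied to each dilate $B/k$, never the inner-product formula---and both arguments produce exactly the constant $4\zeta(n)^2/\zeta(2n)$. The only housekeeping worth making explicit is the interchange of the sum over $k$ with the integral over $Y_n$ (justified by Tonelli, since after Cauchy--Schwarz all terms are nonnegative) and, as you already note, the extension of Theorem \ref{thm:Rogersformula} to indicators of finite-volume sets, which the paper handles by monotone convergence.
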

\begin{rem}
When averaging over all unimodular lattices, for symmetric sets there is an equality, $\int_{X_n}|D_{\rm pr}(B,\Lambda)|^2d\nu_n(\Lambda)=\frac{2\zeta(n)}{\vol(B)}$, so the mean square bound is optimal.
For $Y_n$, however, we can only show an inequality. Since, we can not exclude additional cancelation between $\int |f(\vec{x})|^2d\vec{x}$ and $\int \overline{f(\vec{x})}\iota(f)(\vec{x})d\vec{x}$, it might be possible to construct a set $B$ for which the mean square is much smaller.
\end{rem}
Next, using this mean square bound we can adapt Schmidt's original arguments \cite{Schmidt1960}  to give the following square root bound for the discrepancy of a generic symplectic lattice.
\begin{thm}\label{thm:counting}
Let $n\geq 2$. For any linearly ordered family, $\cB$, of finite-volume Borel sets in $\R^{2n}\setminus\{0\}$, for $\mu_n$-a.e. $\Lambda\in Y_n$ there is $C_\Lambda$ such that for any $B\in\cB$ with $\vol(B)>C_{\Lambda}$
\begin{equation}\label{e:primitive}
D_{\rm pr}(\Lambda,B)\leq \frac{\log^2(\vol(B))}{\sqrt{ \vol(B)}}, \quad D(\Lambda,B)\leq \frac{\log^2(\vol(B))}{\sqrt{ \vol(B)}}.
\end{equation}
\end{thm}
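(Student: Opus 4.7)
The plan is to adapt Schmidt's original argument~\cite{Schmidt1960}, which deduces an analogous discrepancy bound for generic unimodular lattices from Rogers' second-moment formula; since our mean-square estimates \eqref{eq:measureestimate} and \eqref{eq:measureestimate2} have exactly the $1/\vol(B)$ shape of Rogers', the same line of reasoning transfers. I describe the proof of the bound for $D_{\rm pr}$; the one for $D$ is identical with \eqref{eq:measureestimate2} in place of \eqref{eq:measureestimate}. One may assume $\sup_{B\in\cB}\vol(B)=\infty$, since otherwise the conclusion holds vacuously for any $C_\Lambda$ larger than this supremum.

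The argument proceeds in three steps. First, Chebyshev's inequality applied to \eqref{eq:measureestimate} gives, for any Borel set $B$ and any $\eta>0$,
$$\mu_n\bigl(\{\Lambda\in Y_n:D_{\rm pr}(\Lambda,B)>\eta\}\bigr)\leq \frac{4\zeta(2n)}{\eta^2\vol(B)}.$$
Second, using the linear ordering of $\cB$ I extract a countable nested subfamily $B_1\subseteq B_2\subseteq\cdots$ from $\cB$ with $V_k:=\vol(B_k)\to\infty$, chosen so that every $B\in\cB$ of sufficiently large volume is sandwiched $B_k\subseteq B\subseteq B_{k+1}$ for some $k$ (when $\cB$ itself has a genuine volume jump, the sandwich degenerates to $B\in\{B_k,B_{k+1}\}$, which is harmless). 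With the thresholds $\epsilon_k=\tfrac{1}{2}\log^2(V_k)/\sqrt{V_k}$ and the spacing arranged so that $\sum_k 1/\log^4(V_k)<\infty$, Chebyshev combined with Borel--Cantelli produces, for $\mu_n$-a.e.\ $\Lambda$, an index $k_\Lambda$ past which $D_{\rm pr}(\Lambda,B_k)\leq \epsilon_k$. Third, for an arbitrary $B\in\cB$ with $V:=\vol(B)$ large, monotonicity of $\#(\Lambda_{\rm pr}\cap\cdot)$ across the sandwich $B_k\subseteq B\subseteq B_{k+1}$ converts the endpoint bounds into the desired estimate $D_{\rm pr}(\Lambda,B)\leq \log^2(V)/\sqrt{V}$.

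The technical crux is choosing the spacing $\{V_k\}$ so that the Borel--Cantelli series converges while keeping the sandwich overhead within $\log^2(V)/\sqrt{V}$: faster growth of $V_k$ makes Borel--Cantelli easier but the sandwich looser, and vice versa. Schmidt's remedy, which I will carry over, is to apply the mean-square bound separately to the thin annular shells $B_{k+1}\setminus B_k$ between consecutive anchors and, using the linear order of $\cB$, to control $\#(\Lambda_{\rm pr}\cap (B\setminus B_k))$ in terms of $\#(\Lambda_{\rm pr}\cap(B_{k+1}\setminus B_k))$ uniformly over $B\in\cB$ lying in the shell; adapting this procedure (the symplectic setting enters only through the constants in Theorem~\ref{thm:meansquare}) yields both bounds in \eqref{e:primitive}.
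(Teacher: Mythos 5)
Your overall strategy --- deducing the generic discrepancy bound from Theorem \ref{thm:meansquare} by Schmidt's method --- is exactly what the paper does, and Steps 1 and 3 (Chebyshev, reduction to integer-volume anchors via the linear ordering, and the final sandwich/interpolation) match the paper's proof of its Theorem \ref{countingg}. However, the scheme you actually describe in Step 2 has a genuine gap, and the sentence in which you invoke ``Schmidt's remedy'' misstates what that remedy is. A single nested sequence of anchors $B_1\subseteq B_2\subseteq\cdots$ with Borel--Cantelli applied to each $B_k$ individually cannot work for \emph{any} choice of spacing: for the Borel--Cantelli series $\sum_k \log^{-4}(V_k)$ to converge you need $V_k$ to grow at least geometrically, but then the shell $B_{k+1}\setminus B_k$ has volume comparable to $V_k$, so its primitive-point count is $\asymp V_k/\zeta(2n)$ and controlling $\#(\Lambda_{\rm pr}\cap(B\setminus B_k))$ by $\#(\Lambda_{\rm pr}\cap(B_{k+1}\setminus B_k))$ only yields a discrepancy of order $1$, not $\log^2(V)/\sqrt{V}$. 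Conversely, making the shells thin enough that the overhead is $O(\sqrt{V}\log^2 V)$ forces $V_{k+1}-V_k\lesssim\sqrt{V_k}\log^2 V_k$, hence $V_k\asymp k^2\log^4 k$, and then $\sum_k\log^{-4}(V_k)\asymp\sum_k(\log k)^{-4}$ diverges. You have correctly identified this tension as the crux, but the two-step fix you propose does not resolve it.

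What Schmidt actually does (and what the paper reproduces) is a multiscale chaining argument. One takes \emph{dense} anchors $B_N$ with $\vol(B_N)=N$ for every integer $N$, forms for each $T$ the collection $\cK_T$ of all dyadic pairs $N_1=\ell 2^t$, $N_2=(\ell+1)2^t$ with $N_2\leq 2^T$, and applies the mean-square bound \eqref{eq:measureestimate} to every shell $B_{N_2}\setminus B_{N_1}$; since the shells at each fixed scale $t$ are disjoint with total volume $2^T$, the \emph{aggregate} $\sum_{(N_1,N_2)\in\cK_T}\int_{Y_n}|{}_{N_1}S_{N_2}|^2d\mu_n$ is $O((T+1)2^T)$. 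Borel--Cantelli is then applied not to individual sets but to the exceptional events where this aggregated sum of squares is large, and finally each $[0,N)$ with $N<2^T$ is written as a union of at most $T$ dyadic intervals, so Cauchy--Schwarz recovers a pointwise bound on $S_N$ simultaneously for all $N<2^T$ at the cost of only a factor $T\asymp\log N$. This hierarchical decomposition, the aggregation of the Borel--Cantelli events over all scales, and the Cauchy--Schwarz step along the chain are the essential ingredients missing from your write-up; without them the argument as stated does not close.
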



In many applications one is interested in a family of sets obtained by dilation of a fixed initial set, that is $\cB=\{tB:t>0\}$ for some fixed Borel set $B$.
If the set $B$ is not star shaped, (e.g., if $B$ is an annulus or even a ball not containing the origin) the family of dilations is not linearly ordered with respect to inclusion and Theorem \ref{thm:counting} does not apply directly.
Nevertheless, we show that the same result still applies also for dilations of sets that can be obtained as differences and unions of star shaped sets. Explicitly, we show
\begin{thm}\label{t:dilation}
Let $B\subseteq \R^{2n}\setminus\{0\}$ be a Borel set with $\vol(B)=1$. Assume that $B$ can be written as a finite disjoint union of sets $B_1,\ldots B_k$ with each $B_j=B_j^+\setminus B_j^-$ with $B_j^{-}\subseteq B_j^+$ star shaped. Then for $\mu_n$-a.e. $\Lambda\in Y_n$ there is a constant $C_\Lambda$ such that for all $t> C_\Lambda$ we have
$$D_{\rm pr}(\Lambda,tB)\leq \frac{\log^2(t)}{t^n},\quad D(\Lambda,tB)\leq \frac{\log^2(t)}{t^n}.$$
\end{thm}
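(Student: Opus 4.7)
The plan is to reduce Theorem~\ref{t:dilation} to Theorem~\ref{thm:counting} by writing the dilation family $\{tB\}_{t>0}$ as a finite combination of dilations of star shaped sets, each of which \emph{does} form a linearly ordered family.

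First, I would verify that for any star shaped set $A\subseteq\R^{2n}\setminus\{0\}$ (in the sense that $x\in A$ and $s\in(0,1]$ imply $sx\in A$), the family $\{tA\}_{t>0}$ is linearly ordered by inclusion: if $s\leq t$ and $x=sy\in sA$ with $y\in A$, then $(s/t)y\in A$ by the star shape hypothesis applied with scaling factor $s/t\in(0,1]$, so $x=t\cdot(s/t)y\in tA$. Consequently, Theorem~\ref{thm:counting} applies to each of the $2k$ families $\{tB_j^{\pm}\}_{t>0}$, $j=1,\dots,k$.

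Second, define the error functions
\begin{equation*}
E_{\rm pr}(\Lambda,A):=\zeta(2n)\#(\Lambda_{\rm pr}\cap A)-\vol(A),\qquad E(\Lambda,A):=\#(\Lambda\cap A)-\vol(A),
\end{equation*}
so that $\vol(A)\,D_{\rm pr}(\Lambda,A)=|E_{\rm pr}(\Lambda,A)|$ and $\vol(A)\,D(\Lambda,A)=|E(\Lambda,A)|$. The disjoint decomposition $tB=\bigsqcup_{j=1}^k(tB_j^+\setminus tB_j^-)$ makes these error functions additive in the target, giving
\begin{equation*}
E_{\rm pr}(\Lambda,tB)=\sum_{j=1}^k\bigl(E_{\rm pr}(\Lambda,tB_j^+)-E_{\rm pr}(\Lambda,tB_j^-)\bigr),
\end{equation*}
and analogously for $E$.

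Third, I would apply Theorem~\ref{thm:counting} to each of the $2k$ linearly ordered families $\{tB_j^{\pm}\}_{t>0}$ and intersect the resulting full-measure subsets of $Y_n$ to obtain a single full-measure set of $\Lambda$ together with thresholds $C_{\Lambda,j,\pm}$. For $t$ large enough that $\vol(tB_j^\pm)=t^{2n}\vol(B_j^\pm)>C_{\Lambda,j,\pm}$, this yields
\begin{equation*}
|E_{\rm pr}(\Lambda,tB_j^\pm)|\leq\sqrt{\vol(tB_j^\pm)}\,\log^2\!\bigl(\vol(tB_j^\pm)\bigr)\ll t^n\log^2(t),
\end{equation*}
with implied constant depending only on $n$ and on $\max_{j,\pm}\vol(B_j^\pm)$. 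Summing via the triangle inequality over $j$ and dividing by $\vol(tB)=t^{2n}$ produces $D_{\rm pr}(\Lambda,tB)\ll\log^2(t)/t^n$, and the parallel calculation using the companion bound in Theorem~\ref{thm:counting} gives the same bound for $D(\Lambda,tB)$. The implicit multiplicative constant is absorbed into $C_\Lambda:=\max_{j,\pm}C_{\Lambda,j,\pm}$ (enlarging it as needed, and using the flexibility in the choice of $\psi$ noted in the remark after Theorem~\ref{thm:counting} if one wishes to match the stated rate $\log^2(t)/t^n$ exactly rather than up to a constant).

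I expect no serious obstacle. The proof is a short reduction, with Theorem~\ref{thm:counting} supplying all the analytic content. The one conceptual point is that the star shape hypothesis is exactly what makes the dilation families linearly ordered, which was the only property of $\cB$ needed in Theorem~\ref{thm:counting} and which can fail for $\{tB\}_{t>0}$ itself.
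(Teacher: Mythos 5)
Your reduction is correct and is essentially the paper's own argument: the paper likewise applies the counting theorem (in its $\psi$-parametrized form, Theorem \ref{countingg}, with $\psi(t)=64n^4k^2/t^2$) to each of the $2k$ linearly ordered families $\{tB_j^{\pm}\}$, intersects the full-measure sets, and sums by the triangle inequality, absorbing the constants exactly as you indicate.
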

\begin{rem}
We note that the same result (with the same proof) also holds for $\nu_n$-a.e. lattice in $X_n$. This is closely related to a recent result of Athreya and Margulis \cite{AthreyaMargulis18}  who proved a weaker power saving bound for this problem, but with no restriction on the set $B\subseteq \R^n$.
We further note that, since their proof relied on Rogers' formula, using our moment formula it could also be adapted to give the same result for $\mu_n$-a.e. lattice in $Y_n$.
\end{rem}
\subsection*{Acknowledgements}
The authors would like to thank Yuanqing Cai, Solomon Friedberg and Spencer Leslie for many helpful conversations.


\section{Preliminaries and notation}
\subsection{Notations}
For any integer $n\geq 1$, let $J_n$ denote the $n\times n$ exchange matrix, and let $G_n= \Sp(2n,\R)$ be the group of linear transformations preserving the symplectic form on $\R^{2n}$ with coordinate matrix
$\begin{pmatrix}
 & J_n\\
-J_n& \end{pmatrix}.$  We denote by $\G_n=\Sp(2n,\Z)$ the lattice of integer points, by $Y_n=\G_n\bk G_n$ and let $\mu_n$ denote the probability measure on $Y_n$ coming from the Haar measure of $G_n$. For any $F\in L^2(Y_n)$ and $f\in L^2(\R^{2n})$ we denote by $\|F\|_2$ and $\|f\|_2$ their $L^2$-norms with respect to $\mu_n$ and Lebesgue measure respectively. For $\vec{x}\in \R^{2n}$ we denote by $\|\vec{x}\|$ the standard Euclidean norm. We denote by $\R^+$ the set of positive real numbers.

\subsection{Coordinates on the symplectic group}
We will use the following coordinates on the symplectic group $G_n$.
Let $P_n\leq G_n$ denote the identity component of the maximal parabolic subgroup preserving the line spanned by $\vec{e}_{2n}=(0,\cdots,0,1)\in \R^{2n}$.

When $n=1$, $\Sp(2,\R)=\SL_2(\R)$ and $P_1$ has a Langlands decomposition $P_1=U_1M_1A_1$ with $M_1$ the trivial group and
$$U_1=\left\{u_t=\begin{pmatrix}
1 & t\\
0& 1\end{pmatrix}\ \big|\ t\in\R\right\},\ A_1= \left\{a_y=\begin{pmatrix}
y& 0 \\
0& y^{-1}\end{pmatrix}\ \big|\ y>0 \right\},$$
while for $n\geq 2$,
$P_n=U_nM_nA_n$ with
$$A_n= \left\{a_{y}=\begin{pmatrix}
y& 0 &0\\
0& I_{2n-2}& 0\\
0& 0& y^{-1}\end{pmatrix}\ \big|\ y>0 \right\},
$$
$$
M_n=\left\{ \widetilde{m}=\begin{pmatrix}
1& 0 & 0\\
0 & m &0 \\
0& 0& 1
\end{pmatrix}\ \big|\  m\in \Sp(2n-2,\R)\right\}$$
and
$$U_n=\left\{u_{\vec{t}}=\begin{pmatrix}
1 & \vec{t'} & t_{2n}\\
0 & I_{2n-2}& \vec{t'}^{\ast}\\
0& 0& 1\end{pmatrix}\ \big|\ \vec{t'}=(t_2,\ldots,t_{2n-1})\in \R^{2n-2}\ 
\right\}
,$$
where $\vec{t'}^{\ast}= (t_{2n-1}, \ldots,t_{n+1}, -t_n,\ldots, -t_2)^t$.

Fix a maximal compact subgroup, $K_n=\Sp(2n,\R)\cap \SO(2n)$, of $\Sp(2n,\R)$ and  note that $K_n$ is isomorphic to the unitary group $U(n)$. We further note that for $n=1$,  $M_n\cap K_n$ is the trivial group, while for $n\geq 2$, $M_n\cap K_n$ is a maximal compact subgroup of $M_n\cong \Sp(2n-2,\R)$. Thus with slight abuse of notation, for any $n\geq 1$, we denote $K_{n-1}:=M_n\cap K_n$. For future reference, we note that the map sending $k\in K_n$ to its last row induces an identification between $K_{n-1}\bk K_n$ and the unit sphere $S^{2n-1}:=\{\vec{x}=(x_1,\ldots,x_{2n})\in\R^{2n}\ |\ \sum_{j=1}^{2n}x_j^2=1\}$.

\subsection{Haar measures}
We now wish to express the Haar measure $\mu_n$ explicitly in our coordinates.  To simplify notations, we denote by $Q_n=U_nM_n$. Let $\vec{e}_{2n}=(0,\ldots,0,1)\in\R^{2n}$. For any $g\in G_n$, note that $\vec{x}(g)=\vec{e}_{2n}g$ equals the last row of $g$. The map $G_n\to \R^{2n}$ sending $g$ to $\vec{e}_{2n}g$ gives an identification between the homogeneous space $Q_n\bk G_n$ and $\dot{\R}^{2n}:=\R^{2n}\bk \{0\}$. Let $d\vec{x}$ denote Lebesgue measure on $\R^{2n}$ that we think of also as a measure on $Q_n\bk G_n$ under this identification. We will also use the following polar coordinates. By Iwasawa decomposition any element in $Q_n\bk G_n$ can be represented uniquely by $a_yk$ with $a_y\in A_n$ and $k\in K_{n-1}\bk K_n\cong S^{2n-1}$. In these coordinates we have that
\begin{equation}\label{haarrelation}
d\vec{x}(a_yk)=\frac{2\pi^n}{\Gamma(n)}\frac{dy}{y^{2n+1}}d\sigma_n(k),
\end{equation}
where $\sigma_n$ denotes the right $K_n$-invariant probability measure on $K_{n-1}\bk K_n$ (which is the surface measure on the sphere).

Next, let $Q_n(\Z)=Q_n\cap \G_n$ and denote by $\mu_{Q_n}$ the probability Haar measure on $Q_n(\Z)\bk Q_n$. Explicitly, when $n=1$ we have that $Q_1=U_1$ and
$$d\mu_{Q_1}(q)=d\mu_{Q_1}(u_t)=dt,$$
while for $n\geq 2$, $M_n$ is isomorphic to $\Sp(2n-2,\R)$, and
\begin{equation}\label{Haar}
d\mu_{Q_n}(q)=d\mu_{Q_n}(u_{\vec{t}}\widetilde{m})=d\vec{t}d\mu_{n-1}(m).
\end{equation}
Using this decomposition we see that for any $F\in C_c^{\infty}(G_n)$,
\begin{equation}\label{pHaarmeasure}
\int_{G_n}F(g)d\mu_n(g)=\omega_n\int_{Q_n\bk G_n}\int_{Q_n}F(qa_yk)d\mu_{Q_n}(q)\frac{dy}{y^{2n+1}}d\sigma_n(k),
\end{equation}
for a suitable constant $\omega_n$.
To compute $\omega_n$, after unfolding, $(\ref{firstmomentin})$ implies that
\begin{equation}\label{pHaarmeasure1}
d\mu_n(g)=\frac{1}{\zeta(2n)}d\mu_{Q_n}(q)d\vec{x}(a_yk),
\end{equation}
and comparing $(\ref{haarrelation}), (\ref{pHaarmeasure})$ and $(\ref{pHaarmeasure1})$ we get
\begin{equation}\label{Haarnorm}
\omega_n=\frac{2}{\xi(2n)},
\end{equation}
where $\xi(s)=\pi^{-s/2}\Gamma(\frac{s}{2})\zeta(s)$ is the Riemann Xi function.


\subsection{Mellin transform}
Let $C_c^{\infty}(\R^+)$ denote the space of smooth compactly supported functions on $\R^+$. For any $\rho\in C_c^{\infty}(\R^+)$, its Mellin transform is defined by
\begin{equation}\label{e:Mellin}
\hat{\rho}(s):=\int_0^{\infty}\rho(y)y^{-(s+1)}dy.
\end{equation}
We note for $\rho$ smooth and compactly supported, $\hat{\rho}(s)$ is an entire function with $|\hat{\rho}(s)|$ decaying super polynomially in $s$ as $\Im(s)\to \pm\infty$, and it satisfies the inversion formula (for any $\sigma$)
\begin{equation}\label{e:MellinInv}
\rho(y)=\frac{1}{2\pi i}\int_{(\sigma)}\hat{\rho}(s)y^sds.
\end{equation}

We will need to work with a slightly larger family of functions: We denote by $\mathcal C(\R^+)$, the family of functions $\rho$ with Mellin transform $\hat{\rho}(s)$ analytic for $\Re(s)>1$ with super polynomial decay on vertical strips, and note that these are all smooth functions with $\rho(y)\to 0$ as $y\to 0$. For $\rho\in\mathcal C(\R^+)$, we still have the inversion formula \eqref{e:MellinInv} for any $\sigma >1$.
\subsection{Eisenstein series}
Using the Iwasawa decomposition $G_n= U_nM_nA_nK_n$,  for any $g\in G_n$ we write
\begin{equation}\label{idec}
g=u\widetilde{m}a_{y}k
\end{equation}
with $u\in U_n, \widetilde{m}\in M_n, a_y\in A_n$ and $k\in K_n$. Note that this decomposition is not unique, however $a_{y}$ is uniquely determined by (\ref{idec}) and $\frac{1}{y^2}$ is given by the sum of squares of the last row of $g$. Thus for each parameter $s\in \C$, the map
\begin{equation}\label{varphi}
\varphi_s: G_n\longrightarrow \C
\end{equation}
sending $g$ to $y^s$ is well-defined. 
Identifying $K_{n-1}\bk K_n$ with $Q_nA_n\bk G_n$, we can think of $\phi\in L^2(K_{n-1}\bk K_n)$ as a left $Q_nA_n$-invariant function on $G_n$. The corresponding Eisenstein series, $E_n(s,g,\phi)$, attached to this data is then defined as
$$E_n(s,g,\phi)=\sum_{\gamma\in \G_{P_n}\bk \G_n}\varphi_s(\gamma g)\phi(\gamma g),$$
where $\G_{P_n}=\G_n\cap P_n$. We note that $E_n(s,g,\phi)$ converges absolutely for $\Re(s)> 2n$ and has an analytic continuation and functional equation relating $s$ and $2n-s$. When $\phi=1$, the corresponding Eisenstein series $E_n(s,g,1)$ is right $K_n$-invariant, and we abbreviate it by $E_n(s,g)$.

Given a bounded compactly supported function $f$ on $Q_n\bk G_n$ (that we think of as a left $Q_n$-invariant function on $G_n$),
the corresponding incomplete Eisenstein series $\Theta_f\in L^2(Y_n,\mu_n)$ is defined by
$$\Theta_f(g)=\sum_{\gamma\in \G_{P_n}\bk \G_n}f(\gamma g).$$

For functions $f$ on $Q_n\bk G_n$ that factor as $f(a_yk)=\rho(y)\phi(k)$ with $\rho\in \mathcal C(\R^+)$ and $\phi\in L^2(K_{n-1}\bk K_n)$ the Eisenstein series and incomplete Eisenstein series can be related via the Mellin transform as follows: For $\sigma>2n$ we have
\begin{equation}\label{Mellin}
\Theta_f(g)= \frac{1}{2\pi i}\int_{(\sigma)}\hat{\rho}(s)E_n(s,g,\phi)ds.
\end{equation}
For future reference we record the following identity (essentially equivalent to \eqref{firstmomentin})
\begin{Lem}\label{firstmoment}
Let $\sigma_0<2n$ and let $\hat{\upsilon}$ be a meromorphic function that is analytic in the half plane $\Re(s)>\sigma_0$, and satisfies that $|\hat{\upsilon}(\sigma+ir)|\ll_{\sigma,k} (1+r^2)^{-k}$ for any $\sigma>\sigma_0$ and any $k\in \N$. Then for any
$\sigma>2n$, we have
$$\frac{1}{2\pi i}\int_{Y_n}\int_{(\sigma)}\hat{v}(s)E_n(s,g)dsd\mu_n(g)= \omega_n\hat{\upsilon}(2n),$$
with $\omega_n=\frac{2}{\xi(2n)}$ as before.
\end{Lem}
\begin{proof}
Let $\upsilon(y)=\frac{1}{2\pi i} \int_{(\sigma)}\hat{v}(s)y^sds$ and note that this does not depend on the choice of $\sigma>\sigma_0$, that $|\upsilon(y)|\ll_{\sigma} y^\sigma$ for any $\sigma>\sigma_0$ and that $\hat{\upsilon}(s)=\int_0^\infty \upsilon(y)y^{-s}\frac{dy}{y}$ for $\Re(s)>\sigma_0$. Now let $f(a_yk)=\upsilon(y)$ and let $\Theta_f(g)$  denote the corresponding incomplete Eisenstein series, then by \eqref{Mellin} for $\sigma>2n$
$$\int_{Y_n}\frac{1}{2\pi i}\int_{(\sigma)}\hat{\upsilon}(s)E_n(s,g)dsd\mu_n(g)=\int_{Y_n} \Theta_f(g)d\mu_n(g).$$
Next note that the condition $|f(a_yk)|\ll_{\sigma} y^{\sigma}$ with $\sigma>2n$ implies that the series defining $\Theta_f$ absolutely converges. Thus we can use unfolding and $\eqref{pHaarmeasure}$ to get that
\begin{align*}
\int_{Y_n}\Theta_{f}(g)d\mu_n(g)&=\omega_n\int_{Q_n\bk G_n}f(a_yk)\frac{dy}{y^{2n+1}}d\sigma_n(k)\int_{Q_n(\Z)\bk Q_n}d\mu_{Q_n}(q)\\
&=\omega_n\int_0^{\infty}\upsilon(y)y^{-(2n+1)}dy\\
&=\omega_n\hat{\upsilon}(2n).\qedhere
\end{align*}
\end{proof}

\section{Siegel transform and incomplete Eisenstein series}
As mentioned in the introduction the Siegel transform $F_f$ can be identified as an incomplete Eisenstein series corresponding to a maximal parabolic subgroup of $\SL_n(\R)$. We now show that it can also be identified as an incomplete Eisenstein series for the symplectic group and use it to prove some preliminary identities for the second moment.
\subsection{Transitivity}
The symplectic group acts transitively on $\dot{\R}^{2n}$ leading to the identification with $Q_n\bk G_n$. We now note that the
 integer points $\G_n$ also act transitively on $\Z_{\rm pr}^{2n}$ (the set of primitive points in $\Z^{2n}$), leading to the following bijection.
\begin{Lem}\label{coset}
There is a bijection between $\G_{P_n}\bk \G_n$ and $\Z_{\rm pr}^{2n}$ sending $\G_{P_n}\gamma\in \G_{P_n}\bk \G_n$ to the last row of $\gamma$.
\end{Lem}
\begin{proof}
We first show that $\G_{P_n}=Q_n(\Z)$. Indeed, for any $g\in\G_{P_n}=\G_n\cap P_n$, using Langlands decomposition $P_n=U_nM_nA_n$, we can write $g=u_{\vec{t}}\widetilde{m}a_y$; by direct computation it is easy to see that the $(1,1)$-entry of $g$ equals $y$ and the $(2n,2n)$-entry of $g$ equals $\frac1y$; but all entries are integers so $y=1$. Now for any $g\in\G_{P_n}=Q_n(\Z)$, $g\gamma$ has the same last row as $\gamma$. Hence the map $\G_{P_n}\bk \G_n\to \Z_{\rm pr}^{2n}$ sending $\G_{P_n}\gamma\in\G_{P_n}\bk \G_n$ to the last row of $\gamma$ is well-defined.

For injectivity, we need to show that if $\gamma_1,\gamma_2\in \G_n$ have the same last row, then $\gamma_1\gamma_2^{-1}\in \Gamma_{P_n}$. Since $\gamma_1\gamma_2^{-1}\in\G_n$, it suffices to show that $\gamma_1\gamma_2^{-1}\in Q_n$. Recall that for any $g\in G_n$ its last row is given by $\vec{e}_{2n}g$, and $Q_n$ consists exactly of matrices in $G_n$ with the last row equaling $\vec{e}_{2n}$. Hence it suffices to show that $\vec{e}_{2n}\gamma_1\gamma_2^{-1}=\vec{e}_{2n}$. But since $\gamma_1$ and $\gamma_2$ have the same last row, we have $\vec{e}_{2n}\gamma_1=\vec{e}_{2n}\gamma_2$ and the map is injective. For surjectivity, see \cite[Section 5.1]{MoskowitzSacksteder2010}.
\end{proof}

Using this bijection we get the following identification between the Siegel transform and the incomplete Eisenstein series.
\begin{Prop}\label{p:F2Theta}
For any function $f$ on $\R^{2n}$, let $\tilde{f}$ denote the corresponding function on $Q_n\bk G_n$ given by $\tilde{f}(g)=f(\vec{e}_{2n}g)$.
Then for any $g\in G_n$ and any symplectic lattice of the form $\Lambda=\Z^{2n}g$ we have that $F_f(\Lambda)=\Theta_{\tilde f}(g)$.
\end{Prop}
\begin{proof}
Since the primitive vectors in $\Lambda=\Z^{2n}g$ are exactly  $\Z^{2n}_{\rm pr}g$, by Lemma \ref{coset}, there is a unique $\gamma\in \G_{P_n}\bk \G_n$ such that any primitive $\vec{v}\in \Lambda$ is of the form
$\vec{v}=\vec{e}_{2n}\gamma g$. We thus get that
\begin{displaymath}
F_f(\Lambda)=\sum_{\vec{v}\in \Z^{2n}_{\rm pr}g}f(\vec{v})=\sum_{\g\in \G_{P_n}\bk \G_n}f(\vec{e}_{2n}\gamma g)=\Theta_{\tilde f}(g).\qedhere
\end{displaymath}
\end{proof}

With this identification we can rewrite the second moment as
$$\int_{Y_n}|F_f(\Lambda)|^2d\mu_n(\Lambda)=\|\Theta_{\tilde f}\|_2^2.$$
From here on we will work directly with a function $f$ on $Q_n\bk G_n$ and find a formula for $\|\Theta_{f}\|_2^2$.
\begin{rem}
The formula for $\|\Theta_f\|_2^2$ is a special case of more general inner product formula between incomplete Eisenstein Series. While such formulas are known in great generality (see e.g. \cite[Chapter II.2]{MoeglinWaldspurger95}), it is not always easy to translate the general results to an explicit formula such as \eqref{thm:formula}.  Instead we will give here a completely classical and self contained proof for a formula for $\|\Theta_{f}\|_2^2$, from which \eqref{thm:formula} easily follows.
\end{rem}

\subsection{Period formula}
Following the identification of the Siegel transform with the incomplete Eisenstein series we prove the following preliminary identity for the second moment in terms of an inner product with a certain period of the incomplete Eisenstein series.
\begin{Prop}\label{lem:pi2}
For any measurable, bounded and compactly supported function, $f$, on $Q_n\bk G_n$ we have
\begin{equation}\label{l2norm}
\|\Theta_f\|_2^2= \omega_n\int_{Q_n\bk G_n}\overline{f(a_yk)}\mathcal P_f(a_yk)   \frac{dy}{y^{2n+1}}d\sigma_n(k),
\end{equation}
where $\mathcal P_f$ is the period
\begin{equation}\label{e:Pf}
\mathcal P_f(a_yk)=\int_{M_n(\Z)\bk M_n}\int_{U_n(\Z)\bk U_n}\Theta_f(u_{\vec{t}}\widetilde{m}a_yk)d\vec{t}d\mu_{n-1}(m),
\end{equation}
with $M_n(\Z)=M_n\cap \G_n$ and $U_n(\Z)=U_n\cap \G_n$.
\end{Prop}
\begin{proof}
First, using unfolding we have
$$\|\Theta_f\|_2^2=\int_{\G_{P_n}\bk G_n}\overline{f(g)}\Theta_f(g)d\mu_n(g).$$
Next, recalling that $\G_{P_n}=Q_n(\Z)$, by $\eqref{Haar}$ and $\eqref{pHaarmeasure}$ we have
\begin{align*}
\|\Theta_f\|_2^2&= \omega_n\int_{Q_n\bk G_n}\int_{Q_n(\Z)\bk Q_n}\overline{f(qa_yk)}\Theta_f(qa_yk)d\mu_{Q_n}(q)\frac{dy}{y^{2n+1}}d\sigma_n(k)\\
&=\omega_n\int_{Q_n\bk G_n}\overline{f(a_yk)}\int_{M_n(\Z)\bk M_n}\int_{U_n(\Z)\bk U_n}\Theta_f(u_{\vec{t}}\widetilde{m}a_yk)d\vec{t}d\mu_{n-1}(m)\frac{dy}{y^{2n+1}}d\sigma_n(k), \end{align*}
where for the second equality we used the assumption that $f$ is left $Q_n$-invariant.
\end{proof}
\begin{rem}\label{r:fastdecay}
Here the condition that $f$ is compactly supported can be relaxed and replaced with the condition that $f(a_yk)\to 0$ sufficiently fast as $y\to 0$ to insure that the series defining $\Theta_f$ and the integrals in the formula all absolutely converge.
\end{rem}
Having this preliminary identity, in order to prove our moment formula we need to compute the period $\mathcal P_f$ explicitly in terms of $f$. We will do it first in the special case where $f$ is spherical. We  then use raising operators to get it for other $K_n$-types and finally combine all $K_n$-types to get it for general functions.

%

\section{Period computation for spherical functions}
Let $f\in C^\infty(Q_n\bk G_n/K_n)$ be smooth and spherical in the sense that $f(a_yk)=\rho(y)$ with $\rho\in \cC(\R^+)$.  In this case by $(\ref{Mellin})$  we have for $\sigma>2n$
\begin{equation}\label{preformula}
\mathcal P_f(a_yk)=\frac{1}{2\pi i}\int_{M_n(\Z)\bk M_n}\int_{(\sigma)}\hat{\rho}(s)\int_{U_n(\Z)\bk U_n}E_n(s,u_{\vec{t}}\widetilde{m}a_y)d\vec{t}dsd\mu_{n-1}(m).
\end{equation}
Here the inner most integral is the constant term of the Eisenstein series  along the unipotent radical of $P_n$. We will first compute this constant term explicitly and then use it to compute the period.

\subsection{Constant term formula}
The constant term of $E_n(s,g)$ along the unipotent radical of $P_n$ is defined as
$$c_{E_n}(s,g):= \int_{U_n(\Z)\bk U_n}E_n(s,u_{\vec{t}}g)d\vec{t}.$$
For $n=1$ the formula $c_{E_1}(s,a_y)=2(y^s+\frac{\xi(s-1)}{\xi(s)}y^{2-s})$ is well known,
we now compute $c_{E_n}(s, \widetilde{m}a_y)$ explicitly for $n\geq 2$ in terms of the coordinates $(s,y,\widetilde{m})$.

%
\begin{Prop}\label{const}
Let $n\geq 2$.
Keep the notation as above. Then
\begin{equation}\label{eq:constantformula}c_{E_n}(s, \widetilde{m}a_y)= 2y^s+ 2\frac{\xi(s-2n+1)}{\xi(s)}y^{2n-s}+ \frac{\xi(s-1)}{\xi(s)}yE_{n-1}(s-1,m),
\end{equation}
where $\xi(s)= \pi^{-s/2}\G(\frac{s}{2})\zeta(s)$ is the Riemann Xi function.
\end{Prop}

\begin{proof}
Using the expansion $\zeta(s)=\sum_{k=1}^{\infty}\frac{1}{k^s}$ for the Riemann zeta function and noting that any lattice point can be written in a unique way as a multiple of a primitive lattice point, Lemma \ref{coset} for $\Re(s)>2n$ implies that
\begin{equation}\label{comp}
\zeta(s)E_n(s,g)= \sum_{\vec{v}\in \Z^{2n}\bk\{0\}}\frac{1}{||\vec{v}g||^s}.
\end{equation}
Next, for $\vec{v}=(v_1,\ldots,v_{2n})$, denote $\vec{v}^{M_n}=(v_2,\ldots, v_{2n-1})$, 
and integrate \eqref{comp} over $U_n(\Z)\bk U_n$ to get that
\begin{align*}
\zeta(s)c_{E_n}(s, \widetilde{m}a_y)&=\int_{U_n(\Z)\bk U_n}\sum_{\vec{v}\in \Z^{2n}\bk\{0\}}\frac{d\vec{t}}{||\vec{v}u_{\vec{t}}\widetilde{m}a_y||^s}\\
&=\int_{[0,1]^{2n-1}} \left(\mathop{\sum_{(v_1,\vec{v}^{M_n})=0}}_{v_{2n}\neq 0}+\mathop{\sum_{v_1\neq 0}}_{(\vec{v}^{M_n},v_{2n})\in\Z^{2n-1}}+\mathop{\sum_{v_1=0, \vec{v}^{M_n}\neq 0 }}_{v_{2n}\in\Z}\right)\frac{d\vec{t}}{||\vec{v}u_{\vec{t}}\widetilde{m}a_y||^s}.
\end{align*}
By direct computation  we have
$$\vec{v}u_{\vec{t}}\widetilde{m}a_{y}=\left(v_1y, (v_1\vec{t'}+\vec{v}^{M_n})m, y^{-1}(v_1t_{2n}+\vec{v}^{M_n}\vec{t'}^{\ast}+v_{2n})\right),$$
where for $\vec{t}=(t_2,\ldots, t_{2n-1},t_{2n})\in\R^{2n-1}$, $\vec{t'}=(t_2,\ldots, t_{2n-1})\in\R^{2n-2}$.

First, when $(v_1,\vec{v}^{M_n})=0$, we have $\vec{v}u_{\vec{t}}\widetilde{m}a_y= (0,\ldots,0,y^{-1}v_{2n})$. Thus
$$\int_{[0,1]^{2n-1}}\mathop{\sum_{(v_1,\vec{v}^{M_n})=0}}_{v_{2n}\neq 0}\frac{d\vec{t}}{||\vec{v}u_{\vec{t}}\widetilde{m}a_y||^s}=\int_{[0,1]^{2n-1}}\sum_{v_{2n}\neq 0}\frac{y^s}{|v_{2n}|^{s}}d\vec{t}=2\zeta(s)y^s.$$

For the second term, when $v_1\neq 0$, we have
$$\vec{v}u_{\vec{t}}\widetilde{m}a_y= v_1\left(y,\ \left(\vec{t'}+\frac{\vec{v}^{M_n}}{v_1}\right)m,\ y^{-1}\left(t_{2n}+ \frac{\vec{v}^{M_n}\vec{t'}^{\ast}+v_{2n}}{v_1}\right)\right).$$
Thus
\begin{align*}
&\int_{[0,1]^{2n-1}}\mathop{\sum_{v_1\neq 0}}_{(\vec{v}^{M_n},v_{2n})\in\Z^{2n-1}}\frac{d\vec{t}}{||\vec{v}u_{\vec{t}}\widetilde{m}a_y||^s}\\
&= \int_{[0,1]^{2n-2}}\mathop{\sum_{v_1\neq 0}}_{\vec{v}^{M_n}\in\Z^{2n-2}}\int_0^1\sum_{v_{2n}\in\Z}\frac{dt_{2n}d\vec{t'}}{|v_1|^s\left(y^2+||(\vec{t'}+\frac{\vec{v}^{M_n}}{v_1})m||^2+ y^{-2}(t_{2n}+\frac{\vec{v}^{M_n}\vec{t'}^{\ast}+v_{2n}}{v_1})^2\right)^{\frac{s}{2}}}\\
&= \int_{[0,1]^{2n-2}}\sum_{v_1\neq 0}\sum_{\vec{v}^{M_n}\in\Z^{2n-2}}\int_{\R}\frac{dt_{2n}d\vec{t'}}{|v_1|^{s-1}\left(y^2+||(\vec{t'}+\frac{\vec{v}^{M_n}}{v_1})m||^2+ y^{-2}t_{2n}^2\right)^{\frac{s}{2}}}\\
&= \sum_{v_1\neq 0}\int_{\R^{2n-1}}\frac{d\vec{t}}{|v_1|^{s-2n+1}\left(y^2+||\vec{t'}m||^2+y^{-2}t_{2n}^2\right)^{\frac{s}{2}}}\\
&=2\zeta(s-2n+1)\int_{\R^{2n-1}}\frac{d\vec{t}}{\left(y^2+||\vec{t'}m||^2+y^{-2}t_{2n}^2\right)^{\frac{s}{2}}}.
\end{align*}
Since $m\in \Sp(2n-2,\R)$ has determinant one, the above integral equals
\begin{align*}
&2\zeta(s-2n+1)\int_{\R^{2n-1}}\frac{d\vec{t}}{\left(y^2+||\vec{t'}||^2+y^{-2}t_{2n}^2\right)^{\frac{s}{2}}}\\
&= 2\zeta(s-2n+1)y\int_{\R^{2n-1}}\frac{d\vec{t}}{\left(y^2+ t_2^2+\cdots+ t_{2n-1}^2+ t_{2n}^2\right)^{\frac{s}{2}}}\\
&=2\zeta(s-2n+1)y^{2n-s}\int_{\R^{2n-1}}\frac{d\vec{t}}{\left(1+t_2^2+\cdots+t_{2n-1}^2+t_{2n}^2\right)^{\frac{s}{2}}}\\
&=2\frac{\pi^{\frac{2n-1}{2}}\G(\frac{s-2n+1}{2})}{\G(\frac{s}{2})}\zeta(s-2n+1)y^{2n-s}.
\end{align*}

Finally, when $v_1=0$ and $\vec{v}^{M_n}\neq 0$, say $v_{2n+1-i}\neq 0$ for some $2\leq i\leq 2n-1$, then we have
\begin{align*}
\vec{v}u_{\vec{t}}\widetilde{m}a_y&=\left(0,\ \vec{v}^{M_n}m,\ y^{-1}(\sum_{j=2}^{2n-1}\delta_jv_{2n+1-j}t_j+v_{2n})\right)\\
&=v_{2n+1-i}\left(0,\ \frac{\vec{v}^{M_n}m}{v_{2n+1-i}},\ y^{-1}\left(\delta_it_i+\mathop{\sum_{2\leq j\leq 2n-1}}_{j\neq i}\frac{\delta_jv_{2n+1-j}t_j}{v_{2n+1-i}}+ \frac{v_{2n}}{v_{2n+1-i}}\right)\right),
\end{align*}
where $\delta_j$ equals  $-1$ for $2\leq j\leq n$ and $1$ for $n+1\leq j\leq 2n-1$. Thus for such $\vec{v}^{M_n}$ we have
\begin{align*}
&\int_{[0,1]^{2n-1}}\sum_{v_{2n}\in\Z}\frac{d\vec{t}}{||\vec{v}u_{\vec{t}}\widetilde{m}a_y||^s}\\
&= \int_{[0,1]^{2n-3}}\int_0^1\sum_{v_{2n}\in\Z}\frac{dt_i\prod_{\substack{2\leq j\leq 2n-1 \\ j\neq i}}dt_j}{|v_{2n+1-i}|^s\left(||\frac{\vec{v}^{M_n}m}{v_{2n+1-i}}||^2+ y^{-2}\left(\delta_it_i+\sum_{\substack{2\leq j\leq 2n-1 \\ j\neq i}}\frac{\delta_jv_{2n+1-j}t_j}{v_{2n+1-i}}+ \frac{v_{2n}}{v_{2n+1-i}}\right)^2\right)^{\frac{s}{2}}}\\
&=\int_{[0,1]^{2n-3}}\int_{\R}\frac{dt_i\prod_{\substack{2\leq j\leq 2n-1 \\ j\neq i}}dt_j}{|v_{2n+1-i}|^{s-1}\left(||\frac{\vec{v}^{M_n}m}{v_{2n+1-i}}||^2+ y^{-2}t_i^2\right)^{\frac{s}{2}}}\\
&=\frac{y}{|v_{2n+1-i}|^{s-1}}\int_{\R}\frac{dt_i}{\left(||\frac{\vec{v}^{M_n}m}{v_{2n+1-i}}||^2+ t_i^2\right)^{\frac{s}{2}}}\\
&=y\frac{\sqrt{\pi}\G(\frac{s-1}{2})}{\G(\frac{s}{2})}\frac{1}{||\vec{v}^{M_n}m||^{s-1}}.
\end{align*}
Thus by $(\ref{comp})$ we have
\begin{align*}
\int_{[0,1]^{2n-1}}\mathop{\sum_{v_1=0, \vec{v}^{M_n}\neq 0 }}_{v_{2n}\in\Z}\frac{d\vec{t}}{||\vec{v}u_{\vec{t}}a_{y}m||^s}&=y\frac{\sqrt{\pi}\G(\frac{s-1}{2})}{\G(\frac{s}{2})}\sum_{\vec{v}^{M_n}\in\Z^{2n-2}\bk\{0\}} \frac{1}{||\vec{v}^{M_n}m||^{s-1}}\\
&=y\frac{\sqrt{\pi}\G(\frac{s-1}{2})}{\G(\frac{s}{2})}\zeta(s-1)E_{n-1}(s-1,m),
\end{align*}
where $E_{n-1}(s-1,m)$ is the Eisenstein series on $M_n\cong \Sp(2n-2,\R)$ defined as above. Combining these three terms and dividing both sides by $\zeta(s)$ we get \eqref{eq:constantformula} as claimed.
\end{proof}

\subsection{Period computation}
With the above constant term formula, we can compute $\mathcal P_f$ for  $f$ smooth and spherical as follows.
\begin{Prop}\label{intformula-s}
For any function $f$ on $Q_n\bk G_n$ such that $f(a_yk)= \rho(y)$ with $\rho\in \mathcal C(\R^+)$, we have
\begin{equation}\label{ffs}
\mathcal P_f(a_yk)=2\rho(y)+\frac{1}{\pi i}\int_{(n)}\hat{\rho}(s)\frac{\xi(s-2n+1)}{\xi(s)}y^{2n-s}ds+ \frac{2\hat{\rho}(2n)}{\xi(2n)}.
\end{equation}
\end{Prop}
\begin{proof}
Again, for $n=1$ this is well known and we will consider here the case of $n\geq 2$.
First we note that $\frac{\xi(s)}{\xi(s+1)}$ is analytic in the half plane $\Re(s)>1$ and $|\frac{\xi(s)}{\xi(s+1)}|$ grows polynomially in $s$ as $\Im(s)\to\pm\infty$. Thus $\hat{\rho}(s+1)\frac{\xi(s)}{\xi(s+1)}$ is analytic in the half plane $\Re(s)>1$ and $|\hat{\rho}(s+1)\frac{\xi(s)}{\xi(s+1)}|$ decays super polynomially in $s$ as $\Im(s)\to\pm\infty$. Thus it satisfies the conditions in Lemma \ref{firstmoment}.
Moreover, note that $M_n(\Z)\bk M_n=Y_{n-1}$ and for $\sigma> 2n$ we have $\sigma-1>2(n-1)$. Hence applying Lemma \ref{firstmoment} to $\hat{\rho}(s+1)\frac{\xi(s)}{\xi(s+1)}$ and recalling that $\omega_{n-1}=\frac{2}{\xi(2n-2)}$ we get
\begin{align*}
&\frac{1}{2\pi i}\int_{M_n(\Z)\bk M_n}\int_{(\sigma)}\hat{\rho}(s)\frac{\xi(s-1)}{\xi(s)}E_{n-1}(s-1,m)dsd\mu_{n-1}(m)\\
&=\frac{1}{2\pi i}\int_{Y_{n-1}}\int_{(\sigma-1)}\hat{\rho}(s+1)\frac{\xi(s)}{\xi(s+1)}E_{n-1}(s,m)dsd\mu_{n-1}(m)\\
&=\frac{2}{\xi(2n-2)}\hat{\rho}(2n-1)\frac{\xi(2n-2)}{\xi(2n-1)}=\frac{2\hat{\rho}(2n-1)}{\xi(2n-1)}.
\end{align*}
Thus by $(\ref{preformula})$, Proposition \ref{const} and the above equation, we have for $\sigma> 2n$,
\begin{align*}
\mathcal P_{f}(a_yk)&=\frac{1}{\pi i}\int_{(\sigma)}\hat{\rho}(s)\left(y^s+\frac{\xi(s-2n+1)}{\xi(s)}y^{2n-s}\right)ds+\frac{2\hat{\rho}(2n-1)}{\xi(2n-1)}y\\
&=2\rho(y)+\frac{1}{\pi i}\int_{(\sigma)}\hat{\rho}(s)\frac{\xi(s-2n+1)}{\xi(s)}y^{2n-s}ds+\frac{2\hat{\rho}(2n-1)}{\xi(2n-1)}y.
\end{align*}
Recall the Riemann Xi function $\xi(s)=\pi^{-s/2}\Gamma(\frac{s}{2})\zeta(s)$ is a meromorphic function with two simple poles at $s=0$ (with residue $-1$) and $s=1$ (with residue $1$) (see \cite[pp.170-171]{SteinShakarchi2003}). Moreover, for $n\geq 2$, $\xi(s)$ has neither zeros nor poles in the half plane $\Re(s)\geq n$. Thus in this half plane, the function $\hat{\rho}(s)\frac{\xi(s-2n+1)}{\xi(s)}y^{2n-s}$ has only two simple poles at $s=2n-1$ (with residue $\frac{-\hat{\rho}(2n-1)}{\xi(2n-1)}y$) and $s=2n$ (with residue $\frac{\hat{\rho}(2n)}{\xi(2n)}$). 
Shifting the contour of integration from the line $\Re(s)=\sigma$ to the line $\Re(s)=n$ (picking up the contribution of the two poles) we get
\begin{align*}
\mathcal P_f(a_yk)&=2\rho(y)+\frac{1}{\pi i}\int_{(n)}\hat{\rho}(s)\frac{\xi(s-2n+1)}{\xi(s)}y^{2n-s}ds+\frac{2\hat{\rho}(2n)}{\xi(2n)}-\frac{2\hat{\rho}(2n-1)}{\xi(2n-1)}y+\frac{2\hat{\rho}(2n-1)}{\xi(2n-1)}y\\
&=2\rho(y)+\frac{1}{\pi i}\int_{(n)}\hat{\rho}(s)\frac{\xi(s-2n+1)}{\xi(s)}y^{2n-s}ds+ \frac{2\hat{\rho}(2n)}{\xi(2n)}.\qedhere
\end{align*}
\end{proof}

\section{Raising operators}
When the function $f$ is not spherical, even if it is of a rather simple form $f(a_yk)=\rho(y)\phi(k)$,  we don't have such a nice formula for the constant term that will allow us to compute the period $\cP_f$ as we did for the spherical case. Instead of taking this direct approach, borrowing ideas from \cite{KelmerMohammadi12, Yu17},  we will start with the formula for $\cP_f$ for spherical $f$ and apply raising operators to obtain similar formulas for functions of the form $f(a_yk)=\rho(y)\phi(k)$ with $\phi$ of different $K_n$-types. Before we can do this we need some more background on $L^2(K_{n-1}\bk K_n)$ and how it decomposes into irreducible $K_n$-representations. We will assume here that $n\geq 2$ and note that for $n=1$ we have that $K_0$ is trivial, $K_1=\SO(2)$ and the decomposition is the standard Fourier decomposition.

\subsection{Root-space decomposition}
Let $\fg_n$ be the Lie algebra of $G_n$ and $\fk_n$ be the Lie algebra of $K_n$. Let $\fg_{n,\C}=\fg_n\otimes_{\R}\C$ and $\fk_{n,\C}:=\fk_n\otimes_{\R}\C$ be their complexifications respectively. Explicitly,
$$\fg_n=\left\{\begin{pmatrix}
A & B\\
C & D\end{pmatrix}\in \mathfrak{sl}_{2n}(\R)\ |\ A^tJ_n+ J_nD= 0, C^tJ_n= J_nC, B^tJ_n= J_n B\right\},$$
and
$$\fk_n= \left\{\begin{pmatrix}
A & B\\
-B^t & -J_nA^tJ_n\end{pmatrix}\in \mathfrak{sl}_{2n}(\R)\ |\ A^t+ A=0, B^tJ_n=J_nB\right\}.$$
Let $E_{\ell,j}$ be the $2n\times 2n$ matrix with one in the $(\ell,j)^{\rm{th}}$ entry and zero elsewhere. Let $\fh$ be the real vector space spanned by the set
$$\{E_{2n+1-j,j}-E_{j,2n+1-j}\ |\ 1\leq j\leq n\}.$$
Note that $\fh$ is the Lie algebra of a maximal torus of $K_n$. Let $\fh_{\C}$ be the complexification of $\fh$ and for each $1\leq j\leq n$, let
$$H_j:=i\left(E_{2n+1-j,j}-E_{j,2n+1-j}\right)\in \fh_{\C}$$
and let $\varepsilon_j: \fh_{\C}\to \C$ be the linear functional on $\fh_{\C}$ characterized by $\varepsilon_{\ell}(H_j)=\delta_{\ell j}$, where $\delta_{\ell j}$ is the Kronecker symbol. Then there is a root-space decomposition of $\fk_{n,\C}$ with respect to $\fh_{\C}$:
$$\fk_{n,\C}=\fh_{\C}\oplus\bigoplus_{\alpha\in \Phi\left(\fk_{n,\C},\mathfrak{h}\right)}\fk_{\alpha}$$
with $\Phi=\Phi(\fk_{n,\C},\fh_{\C})=\{\varepsilon_{\ell}-\varepsilon_j\ |\ 1\leq \ell\neq j\leq n\}$ the corresponding set of roots, and for any $\alpha\in \Phi$,
$$\fk_{\alpha}:=\left\{X\in \fk_{n,\C}\ |\ [H, X]=\alpha\left(H\right)X\ \textrm{for any $H\in\fh_{\C}$}\right\}$$
is the corresponding root-space. Fix a choice of simple roots
$$\{\varepsilon_j-\varepsilon_{j+1}\ |\ 1\leq j< n\}$$
such that the set of positive roots is given by $\Phi^+=\{\varepsilon_{\ell}-\varepsilon_j\ |\ 1\leq \ell< j\leq n\}$. For any finite-dimensional $\fk_{n,\C}$-module $V$ and any linear functional $\lambda: \fh_{\C}\to \C$, we say a nonzero vector $v\in V$ is of \textit{$K_n$-weight $\beta$} if $H\cdot v=\beta(H)v$ for any $H\in \fh_{\C}$ and we say $v\in V$ is a \textit{highest $K_n$-weight vector} if $X\cdot v=0$ for any $X\in \fk_{\alpha}$ with $\alpha\in \Phi^+$.

\subsection{Induced representations}
Let $P_n= U_nM_nA_n$, $Q_n=U_nM_n$ and $K_{n-1}=M_n\cap K_n$ as before. Let $\pi$ denote the right regular action of $G_n$ on functions on $Q_n\bk G_n$. For each parameter $s\in \C$, the induced representation, $I^s$, is the representation of $G_n$ consisting of measurable functions $f: Q_n\bk G_n\to \C$ satisfying
$$f(Q_na_yg)= y^sf(Q_ng)\ \textrm{for $\mu_n$-a.e. $g\in G_n$ and for any $a_y\in A_n$},$$
with $G_n$ acting on $I^s$ via the right regular action.
By restricting to $K_n$, each $I^s$ can be viewed as a representation of $K_n$. Moreover, for each $s\in\C$, there is a natural isomorphism between $I^s$ and $L^2(K_{n-1}\bk K_n)$ as representations of $K_n$, given by the restriction map sending $f\in I^s$ to $f|_{K_{n-1}\bk K_n}$.

Recall the identification between $Q_n\bk G_n$ and $\dot{\R}^{2n}$ sending $Q_ng$ to $\vec{x}(g)\in\dot{\R}^{2n}$ with $\vec{x}(g)=(x_1,\ldots,x_{2n})=\vec{e}_{2n}g$ the last row of $g$. For $1\leq j\leq n$, let $z_j:=x_j+ ix_{2n+1-j}$ and $\bar{z}_j:=x_j- ix_{2n+1-j}$. Thus functions on $Q_n\bk G_n$ can be realized as functions in coordinates $(z_j,\bar{z_j})$. For any pair of nonnegative integers $(p,q)$, we say a polynomial $P$ in $(z_j,\bar{z_j})_{1\leq j\leq n}$ is \textit{bihomogeneous} of degree $(p,q)$ if
$$P(\lambda z_1,\ldots,\lambda z_n,\bar{\lambda} \bar{z}_1,\ldots,\bar{\lambda} \bar{z}_n)= \lambda^p\bar{\lambda}^qP(z_1,\ldots,z_n,\bar{z}_1,\ldots,\bar{z}_n)\ \textrm{for any $\lambda, z_i\in \C$}.$$
We say a polynomial is \textit{harmonic} if it is annihilated by the Euclidean Laplacian
$$\Delta:=4\sum_{j=1}^{n}\frac{\partial^2}{\partial z_j\partial\bar{z}_j},$$
where $\frac{\partial}{\partial z_j}:=\frac12(\frac{\partial}{\partial x_j}-i\frac{\partial}{\partial x_{2n+1-j}})$ and $\frac{\partial}{\partial \bar{z}_j}:=\frac12(\frac{\partial}{\partial x_j}+i\frac{\partial}{\partial x_{2n+1-j}})$. For each pair of nonnegative integers $(p,q)$, let $H^{p,q}$ be the space of bihomogeneous harmonic polynomials of degree $(p,q)$ and $\mathcal H^{p,q}:=\{P|_{K_{n-1}\bk K_n}  \ |\    P\in H^{p,q}\}$. As a function space, $L^2(K_{n-1}\bk K_n)$ decomposes as
$$L^2(K_{n-1}\bk K_n)=\hat{\bigoplus_{(p,q)\in \N^2}}\mathcal H^{p,q},$$
where $\hat{\bigoplus}$ denotes the Hilbert direct sum. Moreover, one can check that each $\mathcal H^{p,q}$ is invariant under the action of $K_n$. Thus this is a decomposition as $K_n$-representations. Correspondingly, for each parameter $s\in \C$, let $\mathcal H^{s,p,q}\subset I^s$ be the preimage of $\mathcal H^{p,q}$ under the restriction map from $I^s$ to $L^2(K_{n-1}\bk K_n)$ described above. Then $I^s$ has a corresponding decomposition (as $K_n$-representations)
$$I^s=\hat{\bigoplus_{(p,q)\in \N^2}}\mathcal H^{s,p,q}.$$
Moreover, there is a natural $\fg_{n,\C}$-module structure on $I^s_{\infty}:=\bigoplus_{(p,q)\in\N^2}\mathcal H^{s,p,q}$ by taking Lie derivatives: For any $f\in I_{\infty}^s$ and any $X\in\fg_n$, the Lie derivative $\pi(X)$, is defined by
$$\left(\pi(X)f\right)(g):=\frac{d}{dt}f(g\exp(tX))\bigg|_{t=0}.$$
This defines a $\fg_n$-module structure on $I_{\infty}^s$ and it induces a $\fg_{n,\C}$-module structure on $I_{\infty}^s$ via complexification: For any $X_1+iX_2\in\fg_{n,\C}$ with $X_1,X_2\in\fg_n$, define $\pi(X_1+iX_2):=\pi(X_1)+i\pi(X_2)$. Similarly, there is a $\fk_{n,\C}$-module structure on each $\mathcal H^{s,p,q}$.

For each $(s,p,q)$ as above, define $h_{s,p,q}$ by the formula
$$h_{s,p,q}(z_j,\bar{z}_j):=\frac{z_1^p\bar{z}_n^q}{\left(\sum_{j=1}^nz_j\bar{z}_j\right)^{\frac{s+p+q}{2}}}.$$
One can check that $h_{s,p,q}$ is an element in $\mathcal H^{s,p,q}$ and is of $K_n$-weight $p\varepsilon_1-q\varepsilon_n$. Moreover, using Weyl dimension formula for compact connected Lie groups (\cite[p.242]{BrockerDieck1995}) we can compute that the highest weight $\fk_{n,\C}$-module of weight $p\varepsilon_1-q\varepsilon_n$ is of dimension
\begin{equation}\label{dimension}
d_{p,q}:=\frac{(n+p-2)!(n+q-2)!(n+p+q-1)!}{(n-1)!(n-2)!p!q!}
\end{equation}
and it is exactly the dimension of $\mathcal H^{s,p,q}$ (see \cite{BezubikStrasburger2013}). Thus $\mathcal H^{s,p,q}$ is a highest weight $\fk_{n,\C}$-module with the highest $\fk_{n,\C}$-weight vector given by $h_{s,p,q}$.
\subsection{Raising operators}
Define the raising operators $\mathcal R^{(2,0)},\mathcal R^{(0,2)}\in\pi(\fg_{n,\C})$ by
$$\mathcal R^{(2,0)}:=\pi\left(\left(E_{1,1}-E_{2n,2n}\right)+i\left(E_{1,2n}+E_{2n,1}\right)\right)$$ and
$$\mathcal R^{(0,2)}:=\pi\left(\left(E_{n,n}-E_{n+1,n+1}\right)-i\left(E_{n,n+1}+E_{n+1,n}\right)\right).$$
We will apply $\mathcal R^{(2,0)}$ and $\mathcal R^{(0,2)}$ to functions in $I^s_{\infty}$. By examining commutator relations, one can check that $\mathcal R^{(2,0)}$ and $\mathcal R^{(0,2)}$ commute (as left-invariant differential operators), and that $\mathcal R^{(2,0)}$ (resp. $\mathcal R^{(0,2)}$) sends the highest $K_n$-weight vector $h_{s,p,q}$ to a multiple of the highest $K_n$-weight vector $h_{s,p+2,q}$ (resp. $h_{s,p,q+2}$). Using the coordinates $(z_j,\bar{z}_j)$ on $Q_n\bk G_n$ as above, by direct computation we have
$$\mathcal R^{(2,0)}=2z_1\frac{\partial}{\partial\bar{z}_1}\quad \textrm{and}\quad \mathcal R^{(0,2)}=2\bar{z}_n\frac{\partial}{\partial z_n}.$$
Thus for any $s\in\C$ and any nonnegative pair $(p,q)$, we have
\begin{equation}\label{ro1}
\mathcal R^{(2,0)}h_{s,p,q}=-(s+p+q)h_{s,p+2,q}
\end{equation}
and
\begin{equation}\label{ro2}
\mathcal R^{(0,2)}h_{s,p,q}=-(s+p+q)h_{s,p,q+2}.
\end{equation}
\subsection{Parity operator}
Using these raising operators and starting from a spherical function we can get functions of type $(p,q)$ for any even integers $(p,q)$. In order to be able to represent all even functions we also need functions of type $(p,q)$ with $p$ and $q$ both odd. For this reason we introduce the following auxiliary left-invariant differential operator $\mathcal R\in \pi(\fg_{n,\C})$ defined by
\begin{equation}\label{e:aux}
\mathcal R:=\pi\left(\left(E_{1,n}+E_{n,1}-E_{n+1,2n}-E_{2n,n+1}\right)+i\left(E_{1,n+1}+E_{n+1,1}+E_{n,2n}+ E_{2n,n}\right)\right).
\end{equation}
Using the  $(z_j,\bar{z}_j)$ coordinates on $Q_n\bk G_n$ this operator is given by
\begin{equation}\label{do3}
\mathcal R=2z_n\frac{\partial}{\partial \bar{z}_1}+2z_1\frac{\partial}{\partial \bar{z}_n}.
\end{equation}
By examining commutator relations we see that $\mathcal R$ sends a vector of $K_n$-weight $\beta$ to a vector of $K_n$-weight $\beta+ \varepsilon_1+\varepsilon_n$, but $\mathcal R$ is not a raising operator in the sense that it does not send a highest $K_n$-weight vector in $I^s_{\infty}$ to another highest $K_n$-weight vector in $I^s_{\infty}$. Nevertheless, using $(\ref{do3})$ by direct computation we have
\begin{align*}
\mathcal Rh_{s,0,2}(z_j,\bar{z}_j)&=\frac{-2(s+2)z_1z_n\bar{z}_n^2}{\left(\sum_{j=1}^nz_j\bar{z}_j\right)^{\frac{s+4}{2}}}+\frac{4z_1\bar{z}_n}{\left(\sum_{j=1}^nz_j\bar{z}_j\right)^{\frac{s+2}{2}}}\\
&=\frac{-2(s+2)\left(z_1z_n\bar{z}_n^2-\frac{2}{n+2}\left(\sum_{j=1}^nz_j\bar{z}_j\right)z_1\bar{z}_n\right)}{\left(\sum_{j=1}^nz_j\bar{z}_j\right)^{\frac{s+4}{2}}}+\frac{4(n-s)}{n+2}\frac{z_1\bar{z}_n}{\left(\sum_{j=1}^nz_j\bar{z}_j\right)^{\frac{s+2}{2}}}.
\end{align*}
In other words, using the polar coordinates $(a_y,k)$ on $Q_n\bk G_n$, we can write this as
\begin{equation}\label{ado4}
\mathcal Rh_{s,0,2}(a_yk)= (s+2)\varphi_s(a_y)\psi_{2,2}(k)+(n-s)\varphi_s(a_y)\psi_{1,1}(k),
\end{equation}
where
$\psi_{2,2}(z_j,\bar{z}_j)=-2\left(z_1z_n\bar{z}_n^2-\frac{2}{n+2}\left(\sum_{j=1}^nz_j\bar{z}_j\right)z_1\bar{z}_n\right)\big|_{K_{n-1}\bk K_n}\in \mathcal H^{2,2}$ and $\psi_{1,1}(z_j,\bar{z}_j)=\frac{4z_1\bar{z}_n}{n+2}\big|_{K_{n-1}\bk K_n}\in \mathcal H^{1,1}$.
\begin{rem}
Using this operator allows us to pass from a function of type $(0,2)$ (which can be obtained from a spherical function via the raising operator) to a function of type $(1,1)$. Then, starting from $\psi_{1,1}$ and applying the raising operators we can construct functions of type $(p,q)$ also for odd $p$ and $q$.
\end{rem}

\section{Period formula for non-spherical functions}
We can now use the raising operators to boot-strap the spherical period formula to obtain period formulas for other $K_n$-types.
For any nonnegative even integer $m$, we define
\[P_m(s):= \left\{
  \begin{array}{lr}
    1 & \textrm{if $m=0$}\\
    \prod_{j=0}^{\frac{m-2}{2}}\frac{2n-s+2j}{s+2j} & \textrm{if $m>0$,}
  \end{array}
\right.
\]
and $Z_m(s)=P_m(s)\frac{\xi(s-2n+1)}{\xi(s)}$. Note that, using the functional equation $\xi(s)=\xi(1-s)$ (\cite[p.170]{SteinShakarchi2003}) and the definition of $P_m(s)$, we see that $Z_m(s)$ satisfies the functional equation $Z_m(2n-s)Z_m(s)= 1$. In particular, when $\Re(s)=n$ we have
\begin{equation}\label{fees}
|Z_m(s)|^2=\overline{Z_m(s)}Z_m(s)=Z_m(\bar{s})Z_m(s)= Z_m(2n-s)Z_m(s)=1.
\end{equation}

For any pair of nonnegative integers $(p,q)$ with the same parity, let $\mathcal A^{p,q}$ denote the family of functions $f\in L^2(Q_n\bk G_n)$ of the form $f(a_yk)=\rho(y)\phi(k)$ with $\rho\in \mathcal C(\R^+)$ and $\phi\in \mathcal H^{p,q}$.
By using the raising operators we can obtain the following period formula for $f\in \cA^{p,q}$.
\begin{Prop}\label{prop:ef}
Let $(p,q)$ be a pair positive integers with the same parity. For any $f=\rho\phi\in \mathcal A^{p,q}$ we have
\begin{equation}\label{ffns}
\mathcal P_f(a_yk)=2\rho(y)\phi(k)+\frac{(-1)^p}{\pi i}\int_{(n)}\hat{\rho}(s)Z_{p+q}(s)y^{2n-s}ds\phi(k).
\end{equation}
\end{Prop}
\begin{rem}
When $n=1$ we have a similar formula for functions of the form $f(a_yk_\theta)=\rho(y)e^{i m\theta}$ with $m$ nonzero and even given by
$$
\cP_f(a_yk_\theta)=2\rho(y)e^{i m \theta}+\frac{1}{\pi i}\int_{(1)}\hat{\rho}(s)Z_{|m|}(s)y^{2-s}ds e^{i m\theta},
$$
where $k_{\theta}=\begin{pmatrix}
\cos\theta &\sin\theta\\
-\sin\theta &\cos\theta\end{pmatrix}$.
\end{rem}
\subsection{Preliminary lemmas}
We first prove the following two preliminary lemmas showing that having  \eqref{ffns} for one function implies that \eqref{ffns} holds for many functions.
\begin{Lem}\label{prelem1}
Let $(p,q)$ be as in Proposition \ref{prop:ef}. Suppose that there exists some $\rho\in \mathcal C(\R^+)$ and nonzero $\psi\in \mathcal H^{p,q}$ such that  \eqref{ffns} holds for $f=\rho\psi$, then \eqref{ffns} holds for $f=\rho\phi$ for any $\phi\in \mathcal H^{p,q}$.
\end{Lem}
\begin{proof}
Since $\mathcal H^{p,q}$ is an irreducible $\fk_{n,\C}$-module, for any $\phi\in \mathcal H^{p,q}$ there is some left-invariant differential operator $\mathcal D$ generated by $\pi(\fk_{n,\C})$ such that $\mathcal D \psi=\phi$. Recall the function $\varphi_s: G_n\to \C$ defined in $(\ref{varphi})$. Since $\mathcal D$ is generated by $\pi(\fk_{n,\C})$, $\mathcal D$ acts trivially on $\varphi_s$. Moreover, $\mathcal D$ commutes with the left regular action of $G_n$ on $\varphi_s\psi$. Thus $\mathcal D\varphi_s(\gamma g)\psi(\gamma g)= \varphi_s(\gamma g)\mathcal D\psi(\gamma g)=\varphi_s(\gamma g)\phi(\gamma g)$ for any $\gamma\in \G_n$ and $g\in G_n$. Here as before we view $\psi,\phi\in \mathcal H^{p,q}\subset L^2(K_{n-1}\bk K_n)$ as left $U_nA_nM_n$-invariant functions on $G_n$. Hence using $(\ref{Mellin})$ and applying $\mathcal D$ to $\Theta_{\rho\psi}$ we have for any $\sigma> 2n$,
\begin{align*}
\mathcal D \Theta_{\rho \psi}(g)&=\frac{1}{2\pi i}\int_{(\sigma)}\hat{\rho}(s)\sum_{\gamma\in \G_{P_n}\bk \G_n}\mathcal D\varphi_s(\gamma g)\psi(\gamma g)ds\\
&=\frac{1}{2\pi i}\int_{(\sigma)}\hat{\rho}(s)E_n(s,g,\phi)ds=\Theta_{\rho\phi}(g).
\end{align*}
We thus have
\begin{align*}
\mathcal D\mathcal P_{\rho\psi}(a_yk)&=\int_{M_n(\Z)\bk M_n}\int_{U_n(\Z)\bk U_n}\mathcal D\Theta_{\rho\psi}(u_{\vec{t}}\widetilde{m}a_yk)d\vec{t}d\mu_{n-1}(m)\\
&=\int_{M_n(\Z)\bk M_n}\int_{U_n(\Z)\bk U_n}\Theta_{\rho\phi}(u_{\vec{t}}\widetilde{m}a_yk)d\vec{t}d\mu_{n-1}(m)=\mathcal P_{\rho\phi}(a_yk).
\end{align*}
Since we assume  \eqref{ffns} holds for $\rho\psi$, we have
$$\mathcal P_{\rho \psi}(a_yk)=2\rho(y)\psi(k)+\frac{(-1)^p}{\pi i}\int_{(n)}\hat{\rho}(s)Z_{p+q}(s)y^{2n-s}ds\psi(k),$$
and applying $\mathcal D$ to both sides gives
\begin{align*}
\mathcal P_{\rho\phi}(a_yk)&=2\rho(y)\mathcal D\psi(k)+\frac{(-1)^p}{\pi i}\int_{(n)}\hat{\rho}(s)Z_{p+q}(s)y^{2n-s}ds\mathcal D\psi(k)\\
&=2\rho(y)\phi(k)+\frac{(-1)^p}{\pi i}\int_{(n)}\hat{\rho}(s)Z_{p+q}(s)y^{2n-s}ds\phi(k).\qedhere
\end{align*}
\end{proof}

\begin{Lem}\label{prelem2}
Fix a pair of positive integers $(p_0,q_0)$ with the same parity. Suppose \eqref{ffns} holds for all functions $f\in \mathcal A^{p_0,q_0}$, then for any pair of positive integers $(p,q)$ such that $p\geq p_0, q\geq q_0, p\equiv p_0 (\textrm{mod}\ 2)$ and $q\equiv q_0 (\textrm{mod}\ 2)$,  \eqref{ffns} holds for any functions $f\in\mathcal A^{p,q}$.
\end{Lem}
\begin{proof}
For any such $(p,q)$, let $h_{p,q}:=h_{s,p,q}|_{K_{n-1}\bk K_n}$ be the unique (up to scalars) highest $K_n$-weight vector in $\mathcal H^{p,q}$. In view of Lemma \ref{prelem1}, it suffices to show that \eqref{ffns} holds for any $\rho h_{p,q}$ with $\rho\in \mathcal C(\R^+)$. Fix $\sigma>2n$ and for any $y>0$ define
$$\upsilon(y):=\frac{1}{2\pi i}\int_{(\sigma)}\frac{\hat{\rho}(s)}{(s+p_0+q_0)(s+p_0+q_0+2)\cdots(s+p+q-2)}y^sds.$$
Then $\upsilon\in \cC(\R^+)$ with Mellin transform
\begin{equation}\label{invfor}
\hat{\upsilon}(s)= \frac{\hat{\rho}(s)}{(s+p_0+q_0)(s+p_0+q_0+2)\cdots(s+p+q-2)}
\end{equation}
for $\Re(s)>1$. Applying Proposition \ref{prop:ef} to $\upsilon h_{p_0,q_0}$ and using Mellin inversion for $v$, we have
\begin{equation}\label{pre-formula}
\mathcal P_{\upsilon h_{p_0,q_0}}(a_yk)=\frac{1}{\pi i}\int_{(n)}\hat{\upsilon}(s)\left(y^s+(-1)^{p_0}Z_{p_0+q_0}(s)y^{2n-s}\right)dsh_{p_0,q_0}(k).
\end{equation}
Let $\mathcal D_{p,q}:=(-1)^{(p+q-p_0-q_0)/2}\left(\mathcal R^{(2,0)}\right)^{(p-p_0)/2}\left(\mathcal R^{(0,2)}\right)^{(q-q_0)/2}$. Note that $\varphi_sh_{p_0,q_0}=h_{s,p_0,q_0}$ where $\varphi_s$ is as in $(\ref{varphi})$. Thus $(\ref{ro1})$, $(\ref{ro2})$ and commutativity of $\mathcal R^{(2,0)}$ and $\mathcal R^{(0,2)}$ imply that
\begin{equation}\label{ro3}
\mathcal D_{p,q}\varphi_sh_{p_0,q_0}= (s+p_0+q_0)(s+p_0+q_0+2)\cdots (s+p+q-2)\varphi_sh_{p,q}.
\end{equation}
Moreover, since $\mathcal D_{p,q}$ commutes with the left regular action of $G_n$ on $\varphi_sh_{p_0,q_0}$, using $(\ref{Mellin})$, $(\ref{invfor})$ and $(\ref{ro3})$ we have for any $\sigma>2n$
\begin{align*}
\left(\mathcal D_{p,q} \Theta_{\upsilon h_{p_0,q_0}}\right)(g)&=\frac{1}{2\pi i}\int_{(\sigma)}\hat{\upsilon}(s)\sum_{\gamma\in \G_{P_n}\bk \G_n}\mathcal D_{p,q}\varphi_s(\gamma g)h_{p_0,q_0}(\gamma g)ds\\
&=\frac{1}{2\pi i}\int_{(\sigma)}\hat{\rho}(s)E_n(s,g,h_{p,q})ds=\Theta_{\rho h_{p,q}}(g).
\end{align*}
Thus applying $\mathcal D_{p,q}$ to the left-hand side of $(\ref{pre-formula})$ we get
\begin{align*}
\mathcal D_{p,q}\mathcal P_{\upsilon h_{p_0,q_0}}(a_yk)&=\int_{M_n(\Z)\bk M_n}\int_{U_n(\Z)\bk U_n}\mathcal D_{p,q}\Theta_{\upsilon h_{p_0,q_0}}(u_{\vec{t}}\widetilde{m}a_yk)d\vec{t}d\mu_{n-1}(m)\\
&=\int_{M_n(\Z)\bk M_n}\int_{U_n(\Z)\bk U_n}\Theta_{\rho h_{p,q}}(u_{\vec{t}}\widetilde{m}a_yk)d\vec{t}d\mu_{n-1}(m)=\mathcal P_{\rho h_{p,q}}(a_yk).
\end{align*}
Similarly, using $(\ref{invfor})$ and $(\ref{ro3})$, after applying $\mathcal D_{p,q}$ the right-hand side of $(\ref{pre-formula})$ becomes
\begin{align*}
&\frac{1}{\pi i}\int_{(n)}\hat{\rho}(s)\left(y^s+(-1)^{p_0}Z_{p+q}(s)y^{2n-s}\right)dsh_{p,q}(k)\\
&=2\rho(y)h_{p,q}(k)+\frac{(-1)^{p}}{\pi i}\int_{(n)}\hat{\rho}(s)Z_{p+q}(s)y^{2n-s}dsh_{p,q}(k),
\end{align*}
completing the proof.
\end{proof}
\subsection{Proof of Proposition \ref{prop:ef} for $p,q$ even.}
Starting from Proposition \ref{intformula-s} for spherical functions and applying $\cR^{(0,2)}$ and $\cR^{(2,0)}$ respectively, note that the constant term in \eqref{ffs} is killed by these two differential operators, to see that \eqref{ffns}
holds for any  $\rho h_{0,2}\in\cA^{0,2}$ and $\rho h_{2,0}\in\cA^{2,0}$ with $\rho\in \cC(\R^+)$. From this using Lemma \ref{prelem1} and Lemma \ref{prelem2} we see that Proposition \ref{prop:ef} holds for any even $p$ and $q$.
\subsection{Proof of Proposition \ref{prop:ef} for $p,q$ odd}
Let $\mathcal R$ denote the left invariant auxiliary operator defined in \eqref{e:aux}, and recall that
\begin{equation}\label{ado3}
\mathcal Rh_{s,0,2}(a_yk)= (s+2)\varphi_s(a_y)\psi_{2,2}(k)+(n-s)\varphi_s(a_y)\psi_{1,1}(k),
\end{equation}
with $\psi_{2,2}\in \mathcal H^{2,2}$ and $\psi_{1,1}\in \mathcal H^{1,1}$ given in \eqref{ado4}.
Now, in view of the Lemma \ref{prelem1} and Lemma  \ref{prelem2} to prove Proposition \ref{prop:ef} for all $p,q$ odd, it is enough to show that \eqref{ffns} holds for $f=\rho\psi_{1,1}\in \cA^{1,1}$ that we show as follows.

\begin{Lem}\label{oddcase}
Let $\psi_{1,1}\in\mathcal H^{1,1}$ be as above. For any $f\in \mathcal A^{1,1}$ of the form $f=\rho \psi_{1,1}$ with $\rho\in \mathcal C(\R^+)$ we have
$$\mathcal P_f(a_yk)=2\rho(y)\psi_{1,1}(k)-\frac{1}{\pi i}\int_{(n)}\hat{\rho}(s)Z_{2}(s)y^{2n-s}ds\psi_{1,1}(k).$$
\end{Lem}
\begin{proof}

For $f=\rho \psi_{1,1}$, fix $\sigma> 2n$ and for any $y>0$ define
$$\upsilon(y):=\frac{1}{2\pi i}\int_{(\sigma)}\frac{\hat{\rho}(s)}{n-s}y^sds.$$
As before $\upsilon$ is independent of the choice of $\sigma>2n$ and for $\Re(s)>n$ we have
\begin{equation}\label{mein}
\hat{\upsilon}(s)=\frac{\hat{\rho}(s)}{n-s}.
\end{equation}
We note that if $\hat{\rho}(n)\neq 0$, then $\frac{\hat{\rho}(s)}{n-s}$ has a pole at $s=n$. Thus $\upsilon h_{0,2}$ is not necessarily contained in $\mathcal A^{0,2}$. However, since $\hat{\upsilon}(s)$ is analytic in the half plane $\Re(s)>n$ and satisfies the Mellin inversion formula $v(y)=\frac{1}{2\pi i}\int_{(\sigma)}\hat{\upsilon}(s)y^sds$ for $\sigma>2n$, using the same arguments\footnote{The only difference is that in Proposition \ref{intformula-s}, instead of shifting the contour of integration from $\Re(s)=\sigma$ to $\Re(s)=n$, we shift the contour from $\Re(s)=\sigma$ to $\Re(s)=\eta$.} as in Proposition \ref{intformula-s}, Lemma \ref{prelem1} and Lemma \ref{prelem2}, one can deduce the formula
\begin{equation}\label{testid}
\mathcal P_{\upsilon h_{0,2}}(a_yk)=\frac{1}{\pi i}\int_{(\eta)}\hat{\upsilon}(s)\left(y^s+ Z_2(s)y^{2n-s}\right)dsh_{0,2}(k) \end{equation}
for  $\eta\in(n,2n-1)$. With similar computations as in Lemma \ref{prelem1} and \ref{prelem2}, using $(\ref{ado3})$, $(\ref{mein})$, and applying $\mathcal R$ to the left-hand side of $(\ref{testid})$ we get
\begin{equation}\label{lhsado}
\mathcal R\mathcal P_{\upsilon h_{0,2}}(a_yk)= \mathcal P_{\upsilon_1 \psi_{2,2}}(a_yk)+ \mathcal P_{\rho \psi_{1,1}}(a_yk),
\end{equation}
where $\upsilon_1(y):=\frac{1}{2\pi i}\int_{(\sigma)}\hat{\upsilon}(s)(s+2)y^sds$ for $y>0$ and $\sigma> 2n$. We note that for $\Re(s)> n$, $\hat{\upsilon}_1(s)=\hat{\upsilon}(s)(s+2)$ and hence for $\eta\in (n,2n-1)$ we have
\begin{equation}\label{1formula}
\mathcal P_{\upsilon_1 \psi_{2,2}}(a_yk)=\frac{1}{\pi i}\int_{(\eta)}\hat{\upsilon}(s)(s+2)\left(y^s+ Z_4(s)y^{2n-s}\right)ds\psi_{2,2}(k).
\end{equation}
On the other hand, using $(\ref{ado3})$ and applying $\mathcal R$ to the right-hand side of $(\ref{testid})$ we get
\begin{align*}
&\frac{1}{\pi i}\int_{(\eta)}\hat{\upsilon}(s)\left(\mathcal Ry^sh_{0,2}(k)+ Z_2(s)\mathcal Ry^{2n-s}h_{0,2}(k)\right)ds\\
&=\frac{1}{\pi i}\int_{(\eta)}\hat{\upsilon}(s)\bigg((s+2)\varphi_s(a_y)\psi_{2,2}(k)+(n-s)\varphi_s(a_y)\psi_{1,1}(k)\\
&+ Z_2(s)\left((2n-s+2)\varphi_{2n-s}(a_y)\psi_{2,2}(k)+(s-n)\varphi_{2n-s}(a_y)\psi_{1,1}(k)\right)\bigg)ds\\
&=\mathcal P_{\upsilon_1\psi_{2,2}}(a_yk)+\frac{1}{\pi i}\int_{(\eta)}\hat{\rho}(s)\left(y^s-Z_2(s)y^{2n-s}\right)ds\psi_{1,1}(k)\\
&=\mathcal P_{\upsilon_1\psi_{2,2}}(a_yk)+\frac{1}{\pi i}\int_{(n)}\hat{\rho}(s)\left(y^s-Z_2(s)y^{2n-s}\right)ds\psi_{1,1}(k)\\
&=\mathcal P_{\upsilon_1\psi_{2,2}}(a_yk)+2\rho(y)\psi_{1,1}(k)-\frac{1}{\pi i}\int_{(n)}\hat{\rho}(s)Z_2(s)y^{2n-s}ds\psi_{1,1}(k),
\end{align*}
where for the second equality we used the relations $(\ref{mein})$ and $\hat{\upsilon}_1(s)=\hat{\upsilon}(s)(s+2)$ for $\Re(s)=\eta> n$, and equation $(\ref{1formula})$, for the third equality we shift the contour of integration from $\Re(s)=\eta$ to $\Re(s)=n$ (noting that $\hat{\rho}(s)\left(y^s-Z_2(s)y^{2n-s}\right)$ is analytic on the strip $n\leq \Re(s)\leq \eta$), and for the last equality we used Mellin inversion formula for $\rho$. Comparing $(\ref{lhsado})$ and the above equations completes the proof.
\end{proof}

\subsection{The isometry}
Before completing the proof of the second moment formula, we need to define the isometry of $L^2_{\rm even}(\R^{2n})$ which is the same as $L^2_{\rm even}(Q_n\bk G_n)$.
For any even bounded compactly supported $f$ on $Q_n\bk G_n$, we define $\iota(f)$
by
\begin{equation}\label{iotadef}
\iota(f)=\frac12\mathcal P_f-f-\frac{1}{2\zeta(2n)}\int_{\R^{2n}}f(\vec{x})d\vec{x}.
\end{equation}
We now show that this is indeed an isometry.
\begin{Prop}\label{p:isometry}
The map $\iota:L^2_{\rm even}(Q_n\bk G_n)\to L^2_{\rm even}(Q_n\bk G_n)$ is an isometry.
\end{Prop}
\begin{proof}
We first show that for any $(p,q)$ with same parity and any $f\in \cA^{p,q}$ we have $\|\iota(f)\|_2=\|f\|_2$.
Indeed, writing $f(a_yk)=\rho(y)\phi(k)$  we have
\begin{align*}
||f||_2^2&=\frac{2\pi^n}{\Gamma(n)}\int_0^{\infty}|\rho(y)|^2\frac{dy}{y^{2n+1}}\|\phi\|_2^2\\
&=\frac{2\pi^n}{\Gamma(n)}\frac{1}{2\pi i}\int_{(n)}|\hat{\rho}(s)|^2ds \|\phi\|_2^2,
\end{align*}
where the first line is just integration in polar coordinates and the second is Plancherel's theorem.
Now by Proposition \ref{intformula-s}, Proposition \ref{prop:ef} and the relation $(\ref{haarrelation})$  we have
\begin{align*}
\iota(f)(a_yk)&=\frac{(-1)^p}{2\pi i}\int_{(n)}\hat{\rho}(s)Z_{p+q}(s)y^{2n-s}ds\phi(k)\\
&=\frac{(-1)^p}{2\pi i}\int_{(n)}\hat{\rho}(2n-s)Z_{p+q}(2n-s)y^{s}ds\phi(k)=v(y)\phi(k),
\end{align*}
where $v(y)$ has Mellin transform $\hat{v}(s)=(-1)^p\hat{\rho}(2n-s)Z_{p+q}(2n-s)$ for $1<\Re(s)<2n-1$. In particular, by \eqref{fees} for $\Re(s)=n$ we have that
$|\hat{v}(s)|=|\hat{\rho}(2n-s)|$ and hence
\begin{align*}
\|\iota(f)\|^2&=\frac{2\pi^n}{\G(n)}\frac{1}{2\pi i}\int_{(n)}\left|\hat{\rho}(2n-s)\right|^2ds\|\phi\|_2^2\\
&=\frac{2\pi^n}{\G(n)}\frac{1}{2\pi i}\int_{(n)}\left|\hat{\rho}(s)\right|^2ds\|\phi\|_2^2=\|f\|^2.
\end{align*}

Now since the different spaces $\cA^{p,q}$ are orthogonal, and any even smooth compactly supported functions can be decomposed as
$f=\sum_{p\equiv q\pmod{2}} f_{p,q}$ with $f_{p,q}\in \cA^{p,q}$, we get that $\|\iota(f)\|_2=\|f\|_2$ for all even, smooth, compactly supported functions. Since even, smooth, compactly supported functions are dense in $L^2_{\rm even}(\R^{2n})$ this concludes the proof.
\end{proof}


\subsection{Proof of second moment formula}
Let $f$ be an even, compactly supported function on $\R^{2n}$ and let $\tilde f$ be the corresponding function on $Q_n\bk G_n$ given by $\tilde{f}(g)=f(\vec{e}_{2n}g)$.
Using Proposition \ref{p:F2Theta}, Proposition \ref{lem:pi2} and the relation \eqref{haarrelation} we see that
$$\|F_f\|_2^2=\|\Theta_{\tilde f}\|_2^2=\frac{1}{\zeta(2n)}\int_{\R^{2n}}\overline{f(\vec{x})}\mathcal P_f(\vec{x})d\vec{x},$$
Now from the definition of the isometry $\iota$ we have
$$\cP_f=2\iota(f)+2f+\frac{1}{\zeta(2n)}\int_{\R^{2n}}f(\vec{x})d\vec{x},$$
and plugging this in we get that
\begin{align*}
\|F_f\|_2^2&=\frac{1}{\zeta(2n)}\int_{\R^{2n}}\overline{f(\vec{x})}\left(2\iota(f)(\vec{x})+2f(\vec{x})+\frac{1}{\zeta(2n)}\int_{\R^{2n}}f(\vec{x})d\vec{x}\right)d\vec{x}\\
&=\left|\frac{1}{\zeta(2n)}\int_{\R^{2n}}f(\vec{x})d\vec{x}\right|^2+\frac{2}{\zeta(2n)}\int_{\R^{2n}}|f(\vec{x})|^2d\vec{x}+\frac{2}{\zeta(2n)}\int_{\R^{2n}}\overline{f(\vec{x})}\iota(f)(\vec{x})d\vec{x},
\end{align*}
thus concluding the proof.
 \qed

 \begin{rem}
For an even bounded non-negative function $f\in L^2(\R^{2n})$ that is not compactly supported, we can take a sequence $f_j(\vec{x})=f(\vec{x})\chi_{B_j}(\vec{x})$ with $B_j\subseteq \R^{2n}$ the ball of radius $j$. Then $f_j$ monotonously converges to $f$ pointwise, as well as in $L^2$. Moreover $|F_{f_j}(\Lambda)|^2$ also monotonously converges to $|F_f(\Lambda)|^2$, hence, by monotone convergence $\int_{Y_n}|F_f(\Lambda)|^2d\mu_n=\lim_{j\to\infty} \int_{Y_n}|F_{f_j}(\Lambda)|^2d\mu_n$ giving the same formula for $f$.
\end{rem}

\begin{rem}
Given two even compactly supported functions, $f,g$ by computing the mean square of $F_{f+g}$  the second moment formula is equivalent to the following inner product formula
\begin{equation}\label{e:inner}
\<F_{f},F_g\>_{Y_n}=\frac{\<f,1\>\<1,g\>}{\zeta(2n)^2}+\frac{2}{\zeta(2n)}(\<f,g\>+\<\iota(f),g\>),\end{equation}
where $\<\cdot,\cdot\>$ and  $\<\cdot,\cdot\>_{Y_n}$ are the corresponding inner products on $L^2(\R^{2n})$ and $L^2(Y_n,\mu_n)$.
\end{rem}

\section{Applications to counting}

In this section, we apply our second moment formula to get results on lattice point counting problems for a generic symplectic lattice.
\subsection{Mean square bound}
Our first simple application gives a mean square bound for the discrepancy. For the primitive lattice points this is almost immediate while for all regular lattice points we follow the standard argument converting results from primitive lattice points to all lattice points.
\begin{proof}[Proof of Theorem \ref{thm:meansquare}]
For any Borel set $B\subseteq \dot{\R}^{2n}$ let $\chi_B$ be the characteristic function of $B$ and let $f(\vec{x})=\frac{\chi_B(\vec{x})+\chi_B(-\vec{x})}{2}\in L^2_{\rm even}(\R^{2n})$ its even part so that $\int_{\R^{2n}}f(\vec{x})d\vec{x}=\vol(B)$ and $\|f\|_2^2\leq \vol(B)$. Note that for any $\Lambda\in Y_n$, $F_f(\Lambda)=F_{\chi_B}(\Lambda)=\#(\Lambda_{\rm pr}\cap B)$ and by \eqref{firstmomentin}
$$\int_{Y_n}F_f(\Lambda)d\mu_n(\Lambda)=\frac{1}{\zeta(2n)}\int_{\R^{2n}}f(\vec{x})d\vec{x}=\frac{\textrm{vol}(B)}{\zeta(2n)}.$$
Thus by Theorem \ref{thm:Rogersformula} and Cauchy-Schwartz we have
\begin{align*}
\int_{Y_n}\left|\#(\Lambda_{\rm pr}\cap B)-\frac{\vol(B)}{\zeta(2n)}\right|^2d\mu_n(\Lambda)&=\int_{Y_n}\left|F_f(\Lambda)-\frac{\textrm{vol}(B)}{\zeta(2n)}\right|^2d\mu_n(\Lambda)\\
&=\int_{Y_n}\left|F_f(\Lambda)\right|^2d\mu_n(\Lambda)-\left(\frac{\textrm{vol}(B)}{\zeta(2n)}\right)^2\\
&=\frac{2}{\zeta(2n)}\int_{\R^{2n}}\left(\left|f(\vec{x})\right|^2+\overline{f(\vec{x})}\iota(f)(\vec{x})\right)d\vec{x}\\
&\leq\frac{2}{\zeta(2n)}\left(\|f\|_2^2+\|f\|_2\|\iota(f)\|_2\right)\\
&=\frac{4\|f\|_2^2}{\zeta(2n)}\leq \frac{4\vol(B)}{\zeta(2n)},
\end{align*}
which gives \eqref{eq:measureestimate} after multiplying both sides by $\frac{\zeta(2n)^2}{\vol(B)^2}$.


Next for the regular lattice point counting problem, consider the dilated functions $f^k(\vec{x})=f(k \vec{x})$ and $\chi_B^k(\vec{x})=\chi_B(k\vec{x})$.
Since, by our assumption, $0\not\in B$ we have that
$$\#(\Lambda\cap B)=\sum_{k=1}^\infty \#(k \Lambda_{\rm pr}\cap B)=\sum_{k=1}^\infty F_{f^k}(\Lambda).$$
 Integrating over $Y_n$ and using  \eqref{firstmomentin} we get that
\begin{align*}
\int_{Y_n}\#(\Lambda\cap B)d\mu_n(\Lambda)&=\sum_{k=1}^\infty\int_{Y_n}F_{f^k}(\Lambda)d\mu_n(\Lambda)
=\frac{1}{\zeta(2n)}\sum_{k=1}^\infty \int_{\R^{2n}}f^k(\vec{x})d\vec{x}\\
&=\frac{1}{\zeta(2n)}\sum_{k=1}^\infty\frac{1}{k^{2n}} \int_{\R^{2n}}f(\vec{x})d\vec{x}=\vol(B).
\end{align*}
Next, using the moment formula in the form of \eqref{e:inner}, gives
\begin{align*}
\int_{Y_n}|\#(\Lambda\cap B)|^2d\mu_n(\Lambda)&=\sum_{k,l}\langle F_{f^k},F_{f^l}\rangle_{Y_n}\\
&= \sum_{k,l}\frac{\<f^k,1\>\<1,f^l\>}{\zeta(2n)^2}+\sum_{k,l}\frac{2}{\zeta(2n)}\left(\<f^k,f^l\>+\<\iota(f^k),f^l\>\right)\\
 &=\vol(B)^2+\frac{2}{\zeta(2n)}\sum_{k,l}\left(\<f^k,f^l\>+\<\iota(f^k),f^l\>\right),
\end{align*}
and hence
\begin{align*}
\int_{Y_n}|\#(\Lambda\cap B)-\vol(B)|^2d\mu_n(\Lambda)&=\int_{Y_n}|\#(\Lambda\cap B)|^2d\mu_n(\Lambda)-\vol(B)^2 \\
 &=\sum_{k,l}\frac{2}{\zeta(2n)}\left(\<f^k,f^l\>+\<\iota(f^k),f^l\>\right)\\
 &\leq\frac{2}{\zeta(2n)}\sum_{k,l}\left(\|f^k\|_2\|f^l\|_2+\|\iota(f^k)\|_2\|f^l\|_2\right)\\
 &=\frac{4 \|f\|_2^2}{\zeta(2n)}\sum_{k,l}\frac{1}{k^nl^n} \leq \frac{4\zeta(n)^2\vol(B)}{\zeta(2n)}.
 \end{align*}
 Dividing both sides by $\vol(B)^2$ concludes the proof of \eqref{eq:measureestimate2}.
\end{proof}
\subsection{Schmidt's argument}
We can now use the above mean square estimate together with Schmidt's argument from \cite{Schmidt1960} to prove Theorem \ref{thm:counting}, which  follows from the following by taking $\psi(x)=c/x^2$ below with an appropriate choice of constant $c$.
\begin{Thm}\label{countingg}
Let $\cB$ be a linearly ordered  family of Borel sets in $\dot{\R}^{2n}$. Let $\psi$ be a positive, non-increasing function such that $e^t\psi(t)$ is eventually non-decreasing and $\int_1^\infty \psi(t)dt<\infty$. Then for $\mu_n$-a.e. $\Lambda\in Y_n$ there is $C_\Lambda$ such that for all $B\in \cB$ with $\vol(B)>C_\Lambda$
$$|\#(\Lambda_{\rm pr}\cap B)-\frac{\vol(B)}{\zeta(2n)}|\leq \sqrt{\vol(B)}\frac{\log(\vol(B)) }{\psi^{1/2}(\log\vol(B))},$$
and
$$|\#(\Lambda\cap B)-\vol(B)|\leq \sqrt{\vol(B)}\frac{\log(\vol(B)) }{\psi^{1/2}(\log\vol(B))}.$$
\end{Thm}
\begin{proof}
The arguments are identical to the ones given in \cite{Schmidt1960}, and we include the details for the readers' convenience.
Since the proofs for the primitive lattice point counting and for the regular lattice point counting follow from the exact same argument, we will give the details only for first one.

First note that if the set of volumes of sets in $\cB$ is bounded the statement holds vacuously by taking $C_{\Lambda}$ larger than the volume of any set in $\cB$, so we can assume there are arbitrarily large volumes. With this assumption, by  \cite[Lemma 1]{Schmidt1960}, we can assume without loss of generality (after perhaps adding more sets to $\cB$) that $\{\vol(B)\ |\ B\in\cB\}=\R^+$. Thus for any positive integer $N$, there exists some $B_N\in\cB$ with $\vol(B_N)= N$. For any $\Lambda\in Y_n$, and $N\geq 1$ we denote
$$S_N(\Lambda)=\#(\Lambda_{\rm pr}\cap B_{N})-\frac{N}{\zeta(2n)}
$$
and for any $1\leq N_1< N_2$
$${}_{N_1}S_{N_2}(\Lambda)=\#(\Lambda_{\rm pr}\cap (B_{N_2}\bk B_{N_1}))-\frac{N_2-N_1}{\zeta(2n)}.
$$
For any integer $T\geq 3$ we denote by  $\cK_T$ the set of all pairs of integers $N_1, N_2$ of the form $0\leq N_1< N_2\leq 2^T, N_1=\ell2^t$ and $N_2=(\ell+1)2^t$, for integers $\ell$ and $t\geq 0$. Applying \eqref{eq:measureestimate} to the sets $B_{N_2}\setminus B_{N_1}$ and repeating the exact same arguments as in
 \cite[Lemma 2]{Schmidt1960}  we get that for any $T\geq 3$
\begin{equation}\label{lemestimate}
\sum_{(N_1,N_2)\in \mathcal K_T}\int_{Y_n}{}|_{N_1}S_{N_2}(\Lambda)|^2d\mu_n(\Lambda)\leq C_n(T+1)2^T,
\end{equation}
where $C_n=\frac{4}{\zeta(2n)}$. Next, let $\cE_T\subseteq Y_n$ denote the set of all lattices $\Lambda\in Y_n$ for which
\begin{equation}\label{defft}
\sum_{(N_1,N_2)\in \mathcal K_T}{}|_{N_1}S_{N_2}(\Lambda)|^2> \frac{(T+1)2^T}{800\psi(\log(2)(T-1))}.
\end{equation}
Then \eqref{lemestimate} implies that
\begin{equation}\label{measureestimate}
\mu_n(\cE_T)<  800C_n\psi(\log(2)(T-1)).\end{equation}
Consider the limsup set
$$\cE_\infty=\limsup_{T\to\infty}\cE_T:=\bigcap_{j\geq 3}\bigcup_{T\geq j}\cE_T.$$
Since the right-hand side of \eqref{measureestimate} is summable we have that $\mu_n(\cE_\infty)=0$, and we will take its complement $Y_n\setminus \cE_\infty$ to be the full measure set of lattices for which the discrepancy is small.

Now, note that for $N\leq 2^T$, the interval $[0,N)$ can be expressed as a disjoint union of at most $T$ intervals of the form $[N_1,N_2)$ with $(N_1,N_2)\in \mathcal K_T$.
We can thus write
$$S_N(\Lambda)=\sum_{[N_1,N_2)\in\mathcal I}{}_{N_1}S_{N_2}(\Lambda),$$
where $\mathcal I$ is a set consisting of at most $T$ intervals of the form $[N_1,N_2)$ with $(N_1,N_2)\in\mathcal K_T$. Using Cauchy-Schwartz and $(\ref{defft})$ we have for any $\Lambda\notin \cE_T$ and any $N<2^T$
\begin{displaymath}
|S_N(\Lambda)|^2\leq \frac{(T+1)^22^T}{800\psi(\log 2(T-1))}.
\end{displaymath}
Now, for any $\Lambda\not\in \cE_\infty$ there is some $T_\Lambda$ such that for all $T\geq T_\Lambda$ we have that $\Lambda\not\in \cE_T$ and hence   $|S_N(\Lambda)|^2\leq \frac{(T+1)^22^T}{100\psi((T-1)\log 2)}$  for all $N<2^T$.

Now, for any $\Lambda\not\in \cE_\infty$ let $C_\Lambda= \max\{2^{T_\Lambda}+1,N_0\}$ with $N_0$ sufficiently large that for all $N\geq N_0$ we have
\begin{equation} \label{e:N0}
\frac{\sqrt{N+1}\log(N+1)}{2\psi^{1/2}(\log(N+1))}+1\leq \frac{\sqrt{N}\log(N)}{\psi^{1/2}(\log(N))},\end{equation}
where we used that $N\psi(\log(N))$ is eventually non-decreasing to make sure such $N_0$ exists.
Then, for any integer $N> C_{\Lambda}-1$, choose integer $T$ such that $2^{T-1}\leq N<2^T$. In particular we have that $T\geq T_{\Lambda}$ and $N<2^T$ so,
\begin{align*}
|S_N(\Lambda)|^2&\leq \frac{(T+1)^2 2^T}{800 \psi(\log 2(T-1))}\\
&\leq \left(\frac{\log N}{\log 2}+2\right)^2\ \frac{N}{400\psi(\log(N))}<\frac{N\log^2 N}{4\psi(\log(N))} ,
\end{align*}
where we used that $\left(\frac{\log N}{\log 2}+2\right)\leq 10\log(N)$ for all $N\geq 2$.
We have thus verified that for all $N>C_\Lambda-1$ we have $|S_N(\Lambda)|\leq\frac{ \sqrt{N}\log(N)}{2\psi^{1/2}(\log(N))}$.

Next, for any set $B\in \cB$ with $\vol(B)> C_{\Lambda}$, there exists an integer $N> C_{\Lambda}-1$ such that $B_{N}\subseteq B\subseteq B_{N+1}$.
We can interpolate, to bound
$$\left|\#(\Lambda_{\rm pr}\cap B)-\frac{\vol(B)}{\zeta(2n)}\right|\leq\max\left\{\left|S_{N}(\Lambda)\right|, \left|S_{N+1}(\Lambda)\right|\right\}+1,$$
and since $N,N+1\geq C_\Lambda-1$ we can bound
$$\left|\#(\Lambda_{\rm pr}\cap B)-\frac{\vol(B)}{\zeta(2n)}\right|\leq \frac{\sqrt{N+1}\log(N+1)}{2\psi^{1/2}(\log(N+1))}+1\leq \frac{\sqrt{\vol(B)}\log(\vol(B))}{\psi^{1/2}(\log(\vol(B)))} ,$$
where we used \eqref{e:N0} recalling that $N\geq N_0$.

The same proof with the obvious modifications give the same result for the general lattice point counting problem.
\end{proof}

\subsection{Dilations}
We now want to apply our result for the special case where our family is given by a dilation of a fixed set $B\subseteq \dot{\R}^{2n}$.
\begin{proof}[Proof of theorem \ref{t:dilation}]
Write $B=\bigsqcup_{j=1}^k B_j$ with $B_j=B_j^+\setminus B_j^-$ and note that any dilation is of the form $tB=\bigsqcup_{j=1}^k tB_j$ and that $tB_j=tB_j^+\setminus tB_j^-$.
Moreover,
$$\#(\Lambda\cap tB)=\sum_{j=1}^k\#(\Lambda\cap tB_j^+)-\sum_{j=1}^k\#(\Lambda\cap tB_j^-),$$
and similarly for the primitive lattice points.
Considering the finitely many linearly ordered families $\cB_j^{\pm}=\{tB_j^\pm:t\in \R^+\}$ and applying Theorem \ref{countingg} to each one with $\psi(t)=\frac{64n^4k^2}{t^2}$, we get that, for each $j$, for $\mu_n$-a.e. $\Lambda\in Y_n$ there is $C_{\Lambda,j}^{\pm}$ such that for all $t>C_{\Lambda,j}^{\pm}$
$$| \#(\Lambda\cap tB_j^\pm)-\vol(tB_j^{\pm}) |\leq \frac{t^n\log^2(t)}{2k}.$$
The intersection of these finitely many full measure sets is still of full measure and taking $C_{\Lambda}=\max\{ C_{\Lambda,j}^{\pm}: 1\leq j\leq k\}$ we get that for all $t>C_\Lambda$
\begin{align*}
|\#(\Lambda\cap tB)-t^{2n}|
&\leq \sum_{j=1}^k\left|\#(\Lambda\cap tB_j^+)-\vol(tB_j^+)\right|+\left|\#(\Lambda\cap tB_j^-)-\vol(tB_j^-)\right|\leq t^n\log^2(t),
\end{align*}
so that $D(\Lambda,tB)\leq \frac{\log^2(t)}{t^n}$ as claimed.
A similar argument gives the same bound for the primitive lattice points.
\end{proof}

\bibliographystyle{alpha}
\bibliography{DKbibliog}

\end{document}